\newtheorem{thm}{Theorem}[section]
\newtheorem{lem}[thm]{Lemma}
\newtheorem{prop}[thm]{Proposition}
\theoremstyle{definition}
\newtheorem{hypothesis}[thm]{Hypothesis}
\newtheorem{rem}[thm]{Remark}
\newtheorem{notn}[thm]{Notation}
\newcommand{\mc}{\mathcal}
\newcommand{\spec}{{\rm Spec} \,}
\newcommand{\cha}{\operatorname{char}}
\def\co{{\mathcal O}}
\def\oqmm13{\co_q(M_{1,3})}
\def\oqm23{\co_q(M_{2,3})}
\newcommand{\mb}{\mathbb}
\title[]{Free subalgebras of quotient rings of Ore extensions}
\author{Jason P.~Bell}
\thanks{The first author was supported by NSERC grant 31-611456.}
\keywords{Free algebra, division algebra, Ore extension, skew polynomial ring}
\subjclass[2000]{16K40, 16S10, 16S36, 16S85}
\address{Jason Bell\\
Department of Mathematics\\
Simon Fraser University\\
Burnaby, BC V5A 1S6\\
Canada}
\email{jpb@math.sfu.ca}
\author{D. Rogalski}
\thanks{The second author was supported by NSF grant DMS-0900981.}
\address{Daniel Rogalski\\
Department of Mathematics\\
University of California, San Diego\\
La Jolla, CA 92093-0112\\
USA}
\email{drogalsk@math.ucsd.edu}
\begin{document}
\bibliographystyle{plain}
%%%%%%%%%%%%%%%%%%%%%%%%%%%%%%%%%%%%%%%%%%%%%%%%%%%%%%%%%%%%%%%%%%%%%%%%%%%%%%%%%%%%%%%%%%%%%%%%%%%%%%%%%%%%%%%%%%%%%%

\begin{abstract}  Let $K$ be a field extension of an uncountable base field $k$, let $\sigma$ be a $k$-automorphism of $K$, and let $\delta$ be a $k$-derivation of $K$.  We show that if $D$ is one of $K(x;\sigma)$ or $K(x;\delta)$, then $D$ either contains a free algebra over $k$ on two generators, or every finitely generated subalgebra of $D$ satisfies a polynomial identity.  As a corollary, we show that the quotient division ring of any iterated Ore extension of an affine PI domain over $k$ is either again PI, or else it contains a free algebra over its center on two variables.
\end{abstract}
%%%%%%%%%%%%%%%%%%%%%%%%%%%%%%%%%%%%%%%%%%%%%5%%%%%%%%%%%%%%%%%%%%%%%%%%%%%%%%%%%%%%%%%%%%%%%%%%%%%%%%%%%%%%%%%%%%%%%
\maketitle
%\tableofcontents

\section{Introduction}

Many authors have noted that it is often the case that noncommutative division algebras have free subobjects.  For example, the existence of non-abelian free groups inside the multiplicative group $D^{\times}$ of a division algebra $D$ has been studied in several papers (see \cite{RV, Ch}, and the references therein).  It is now known that if $D$ is noncommutative and has uncountable center, then $D^{\times}$ contains a free subgroup on two generators \cite{Ch}.

The question of when a division $k$-algebra $D$ contains a free $k$-subalgebra on two generators has also attracted much attention.  The first result in this direction was obtained by Makar-Limanov \cite{ML}, who showed that if $A_1(k)=k\{x,y\}/(xy-yx-1)$ is the Weyl algebra over a field $k$ of characteristic $0$, then its quotient division algebra $D_1(k)$ does indeed contain such a free subalgebra.  This result is perhaps surprising to those only familiar with localization in the commutative setting, and is in fact a good demonstration of how noncommutative localization is less well-behaved.  In particular, the Weyl algebra $A$ is an algebra of quadratic growth; that is, if we let $V$ denote the $k$-vector subspace of $A$ spanned by $1$ and the images of $x$ and $y$ in $A$, then the dimension of $V^n$ is a quadratic function of $n$.  On the other hand, a free algebra on two generators has exponential growth.  This
is a good example of the principle that there is no nice relationship, in general, between the growth of a finitely generated algebra and the growth of other subalgebras of its quotient division algebra.

We note that by a result of Makar-Limanov and Malcolmson \cite[Lemma 1]{ML3}, if a division $k$-algebra $D$ contains a free $k$-subalgebra on two generators, then it contains a free $F$-subalgebra on two generators for any central subfield $F$.  Thus the choice of base field is not an important consideration when considering the existence of free subalgebras, and need not even be mentioned.
Now there are certain division algebras which cannot contain copies of free algebras on more than one generator for trivial reasons, for example division algebras which are algebraic over their centers.   Note also that a free algebra on two generators does not satisfy a polynomial identity.  We say that a $k$-algebra $R$ is \emph{locally PI} if every finitely generated $k$-subalgebra of $R$ is a polynomial identity ring (this is also easily seen to be independent of the choice of central base field $k$).  An obvious necessary condition for a division algebra $D$ to contain a noncommutative free algebra is that $D$ not be locally PI.  On the other hand, there are no known examples of division algebras which do not contain a free algebra on two generators, except locally PI ones.

In light of the discussion above, we say that a division algebra $D$ satisfies the \emph{free subalgebra conjecture} if $D$ contains a free subalgebra on $2$ generators if and only if $D$ is not locally PI.  In \cite{ML83}, Makar-Limanov annunciated the FOFS (full of free subobjects) conjecture, one part of which was the statement that every division algebra $D$ which is finitely generated (as a division algebra) and infinite-dimensional over its center contains a free subalgebra on 2 generators.  It is easy to see using a bit of PI theory that this statement is equivalent to what we have called the free subalgebra conjecture here.  Stafford, independently, formulated a similar conjecture \cite{Sm}.  As Makar-Limanov also notes, the conjecture is a bit provocative as stated because it implies the resolution of the Kurosh problem for division rings.  However, we will study the conjecture here only for special types of division rings in any case.

Since Makar-Limanov's original breakthrough, many authors have used his ideas to demonstrate the existence of free subalgebras on $2$ generators in the quotient division algebras of many special classes of rings, especially certain Ore extensions, group algebras, and enveloping algebras of Lie algebras \cite{FGS, Licht, Lor, ML, ML15, ML2, ML3, SG, SG98}.
\begin{comment}
For example Makar-Limanov \cite{ML2} showed that if $k$ is a field and $G$ is a nilpotent torsion-free non-abelian group then the quotient division algebra of $k[G]$ contains a free $k$-algebra on two generators.  This result was also extended to polycyclic-by-finite- that is not abelian then the quotient division algebra of the group algebra $k[G]$ contains a free $k$-algebra on two generators.  Makar-Limanov and Malcomson \cite{ML3} showed that the quotient division algebras of enveloping algebras of certain Lie algebras contain copies of a free algebra on two generators.
\end{comment}
Our main aim here is to further develop the Ore extension case.  Suppose that $D$ is a division ring with automorphism $\sigma: D \to D$ and $\sigma$-derivation $\delta$, and let $D(x; \sigma, \delta)$ be the quotient division ring of the Ore extension $D[x; \sigma, \delta]$.  Lorenz \cite{Lor} showed that $k(t)(x;\sigma)$ contains a free subalgebra on 2 generators when $\sigma$ has infinite order.  Shirvani and J. Z. Gon\c calves \cite{SG} showed that if $R$ is a $k$-algebra which is a UFD with field of fractions $K$ and the property that $R^{\times} = k^{\times}$, and $\sigma: R \to R$ is a $k$-automorphism such that the $\sigma$-fixed subring of $R$ is $k$, then $K(x; \sigma)$ contains a free subalgebra on 2 generators (in fact, even a free group algebra of rank $|k|$).

In this paper, we first give in Section~\ref{secML} some new criteria for the existence of a free $k$-subalgebra on two generators in a division ring $D(x; \sigma, \delta)$, following the main idea of Makar-Limanov's original method.  We then use these criteria to completely settle the free subalgebra conjecture for the case of Ore extensions of fields, assuming an uncountable base field.  Our main results are the following.
\begin{thm}
\label{cor: 1} \label{thm: sigma} Let $K/k$ be a field extension and $\sigma: K \to K$ a $k$-automorphism.
\begin{enumerate}
\item If $k$ is uncountable, then the following are equivalent:
\begin{enumerate}
\item[(i)] $K(x; \sigma)$ contains a free $k$-subalgebra on $2$ generators;
\item[(ii)] $K(x; \sigma)$ is not locally PI;
\item[(iii)] $K$ has an element lying on an infinite $\sigma$-orbit.
\end{enumerate}
\item If $k$ is countable, the same conclusion as in (1) holds if either $K/k$ is infinitely generated as a field extension, or if  $\sigma$ is induced by a regular $k$-automorphism of a quasi-projective $k$-variety with function field $K$.
    \end{enumerate}
\end{thm}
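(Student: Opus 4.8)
The plan is to prove, in part (1), the cycle of implications (i) $\Rightarrow$ (ii) $\Rightarrow$ (iii) $\Rightarrow$ (i), the point being that only (iii) $\Rightarrow$ (i) requires $k$ to be uncountable; part (2) then amounts to re-establishing this single implication under either of the two alternative hypotheses. The implication (i) $\Rightarrow$ (ii) is immediate: a free algebra on two generators satisfies no polynomial identity, so no ring containing one can be locally PI. For (ii) $\Rightarrow$ (iii) I would prove the contrapositive. Suppose every element of $K$ lies on a finite $\sigma$-orbit, and let $A \subseteq K(x;\sigma)$ be a finitely generated $k$-subalgebra; only finitely many elements of $K$ occur in the numerators and denominators of a generating set, and the subfield $F \subseteq K$ they generate together with their (finite) $\sigma$-orbits is finitely generated over $k$, is $\sigma$-stable, and has $\sigma|_F$ of finite order $N$ (since $\sigma$ permutes a finite generating set of $F$). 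Then $x^N$ is central in $F[x;\sigma]$, so $F[x;\sigma]$ is a finite module over the central subring $F^{\sigma}[x^N]$ and is therefore PI; hence so is its Goldie quotient division ring $F(x;\sigma)$, and $A \subseteq F(x;\sigma)$ is PI. So $K(x;\sigma)$ is locally PI, and no uncountability hypothesis was used.

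It remains to prove (iii) $\Rightarrow$ (i): given $a \in K$ on an infinite $\sigma$-orbit, produce a free $k$-subalgebra on two generators in $K(x;\sigma)$. Here I would invoke the criteria of Section~\ref{secML}, which reduce this to exhibiting two elements of $K(x;\sigma)$ --- built from $x$ and finitely many parameters from $K$, of \emph{resolvent type} so that the requisite degree spread is present --- satisfying a list of non-degeneracy conditions; each such condition forces a prescribed finite family of products to be $k$-linearly independent in the skew Laurent series division ring $K((x^{-1};\sigma^{-1})) \supseteq K(x;\sigma)$, and hence forces the corresponding words in the two chosen elements to be $k$-linearly independent there, i.e. forces freeness. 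For $k$ uncountable the argument is then a genericity count: one lets the parameters vary in a family over $k$, say through $a - \lambda$ with $\lambda \in k$, and observes that each of the countably many non-degeneracy conditions fails only where a certain polynomial in $\lambda$ over $K$ vanishes; since $k$ is uncountable, a value of $\lambda$ avoiding all these finite bad sets exists, and the criterion then yields the free subalgebra. The main obstacle is to show that these polynomials are not identically zero --- this is precisely where the infinite-orbit hypothesis is used, via the Makar-Limanov-style leading-term analysis carried out in Section~\ref{secML}.

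For part (2), with $k$ countable, the implications (i) $\Rightarrow$ (ii) $\Rightarrow$ (iii) go through unchanged, and only (iii) $\Rightarrow$ (i) must be reworked: one needs an internal substitute for the uncountable supply of scalars. If $K/k$ is infinitely generated, I would use the transcendence sitting inside $K$ itself: either the infinite $\sigma$-orbit of $a$ already spans a subfield of infinite transcendence degree over $k$, on which $\sigma$ acts (after localizing) as a shift, so the non-degeneracy conditions hold essentially by inspection; or one can extract $s \in K$ transcendental over the subfield generated by the orbit of $a$, and use $s$ in place of a generic scalar. If instead $\sigma$ is induced by a regular $k$-automorphism of a quasi-projective $k$-variety $X$ with $k(X) = K$, then a function on an infinite $\sigma$-orbit forces $\sigma$ to have infinite order on $X$, and a geometric/dynamical argument --- possibly after enlarging $k$ to absorb the part of $K$ on which $\sigma$ acts with bounded orbits and after passing to a suitable model --- produces a closed point of $X$ whose $\sigma$-orbit is infinite and in sufficiently general position; the discrete valuations of $K$ attached to this orbit then supply the linear-independence data demanded by the criteria of Section~\ref{secML}, transferring the resulting free subalgebra back over $k$ being the delicate step. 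Producing such a point-orbit, and handling the case where the orbit of $a$ already exhausts the transcendence available in the infinitely generated setting, is where I expect the real work of part (2) to lie.
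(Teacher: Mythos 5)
Your treatment of (i) $\Rightarrow$ (ii) and of (ii) $\Rightarrow$ (iii) is fine and matches the paper's (the paper writes $K$ as a directed union of the fixed fields $K_n$ of $\sigma^n$ and observes $K(x;\sigma)=\bigcup K_n(x;\sigma)$ is a union of PI rings; your version via a $\sigma$-stable finitely generated subfield $F$ with $\sigma|_F$ of finite order is equivalent). The problem is that for (iii) $\Rightarrow$ (i) --- the only implication with real content --- your proposal is a plan rather than a proof: you repeatedly defer the substance ("the main obstacle is to show that these polynomials are not identically zero," "this is where I expect the real work to lie"). That deferred content is precisely Propositions~\ref{prop: infgen} and~\ref{prop: fingen} of the paper, and none of it is supplied. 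Concretely: you never specify the elements $b$ to which the criteria of Section~\ref{secML} are applied, never verify condition \eqref{key-eq-2} of Theorem~\ref{thm: criterion} or the hypotheses of Lemma~\ref{lem: val}, and never address the characteristic-$p$ side condition $\{a : \sigma^p(a)=a\}=E$, which in the paper forces a nontrivial reduction to the totally transcendental case (replacing $(K,\sigma)$ by $(K',\sigma^d)$ over the algebraic closure $L$ of $k$ in $K$).

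Where you do commit to specifics, they diverge from what actually works. The uncountability of $k$ is not used for a genericity count on "countably many non-degeneracy conditions each failing on a finite set of $\lambda$" applied directly to the freeness criterion; it is used to choose rational functions $f_\mu$ all of whose $\sigma$-iterates have divisors avoiding the countable set $S$ of divisors contracted by some power of the birational self-map $\sigma\colon X\dashrightarrow X$ of a projective model (the point being that for finitely generated $K/k$ one only gets a birational map, not an automorphism, and Lemma~\ref{lem: val} needs $v_{\sigma^i(C)}=v_C\circ\sigma^i$). You also omit entirely the case in which every divisor appearing in the chosen $(f_i)$ lies on a finite $\sigma$-orbit; the paper must then run a units-group argument (after \cite[Theorem 5.7]{BRS}) to manufacture a $\sigma^n$-eigenvector $g$ with eigenvalue of infinite order and apply Lemma~\ref{lem: val} to $k(g)$. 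Finally, your sketch of the infinitely generated case is partly incoherent: after reducing to Hypothesis~\ref{main-hyp}, $K$ \emph{is} the field generated by the orbit of $a$, so there is no element $s$ transcendental over that subfield to use as a generic scalar; and "infinitely generated" does not imply "infinite transcendence degree with $\sigma$ acting as a shift." The paper instead proves directly (Proposition~\ref{prop: infgen}) that the monomials in $\{ax^j : j\geq 1\}$ (or $j\leq -1$) are linearly independent, by showing the products $a_0a_{i_1}\cdots a_{i_m}$ are $k$-independent --- an elementary argument that bypasses the Section~\ref{secML} criteria altogether. As it stands, your proposal establishes the easy implications but leaves the entire core of the theorem unproved.
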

\noindent We expect that the free subalgebra conjecture for $K(x; \sigma)$ is always true, with no restrictions on $k$; in any case, the theorem above certainly covers the cases one is most likely to encounter.

We also study the derivation case, which is in fact easier and requires no assumption on the base field.
\begin{thm}
\label{thm: delta} Let $K$ be a field extension of a field $k$.  If $\delta: K \to K$ is a $k$-derivation, then $K(x; \delta)$ contains a free $k$-subalgebra on 2 generators  if and only if it is not locally PI.
\end{thm}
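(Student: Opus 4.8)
The plan is to prove the two directions separately. The forward direction is immediate: a free $k$-algebra on two generators satisfies no polynomial identity, so if one embeds in $K(x;\delta)$ then $K(x;\delta)$ is not locally PI. (This also covers the degenerate case $\delta=0$, where $K(x;\delta)=K(x)$ is a field and there is trivially no free subalgebra.) For the reverse direction I would split the analysis according to whether $\delta$ is ``$p$-integral'' over its field of constants $C:=\ker\delta$, by which I mean (in characteristic $p$) that the derivations $\delta,\delta^{p},\delta^{p^{2}},\dots$ are linearly dependent over $C$ inside $\mathrm{End}_{C}(K)$---equivalently, that $\delta$ is a root of a nonzero additive polynomial $\Theta(Y)=\sum_{e}\lambda_{e}Y^{p^{e}}$ with $\lambda_{e}\in C$; in characteristic $0$ this condition degenerates to $\delta=0$, and one checks (by induction on the degree of $\Theta$, using that $K$ is a field) that it is also equivalent to $[K:C]<\infty$.

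If $\delta$ is $p$-integral over $C$, I claim $K(x;\delta)$ is locally PI. Here all the $\delta^{p^{e}}$ are $C$-linear derivations of $K$; taking a nontrivial dependence relation among them and dividing by its top coefficient gives $\delta^{p^{N}}+c_{N-1}\delta^{p^{N-1}}+\cdots+c_{0}\delta=0$ with $c_{i}\in C$. Since $\binom{p^{e}}{j}\equiv 0\pmod p$ for $0<j<p^{e}$, one has $[x^{p^{e}},v]=\delta^{p^{e}}(v)$ for all $v\in K$; hence the polynomial $P(x)=x^{p^{N}}+c_{N-1}x^{p^{N-1}}+\cdots+c_{0}x\in C[x]$ commutes with every element of $K$, and it commutes with $x$ because $\delta(c_{i})=0$, so $P(x)$ is central. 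As $C$ is also central, $K[x;\delta]$ is a free left $C[x]$-module on a $C$-basis of $K$ (by a leading-$x$-term argument), and $C[x]$ is free of rank $p^{N}$ over $C[P(x)]$, so $K[x;\delta]$ is a finitely generated module over its central commutative subring $C[P(x)]$. A ring that is module-finite over a central commutative subring is PI, and PI passes to localizations, so $K(x;\delta)$ is PI, in particular locally PI. Combined with the forward direction, this shows that if $K(x;\delta)$ is not locally PI then $\delta$ is not $p$-integral over $C$.

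It remains to show that if $\delta$ is not $p$-integral over $C$ then $K(x;\delta)$ contains a free $k$-algebra on two generators; together with the previous paragraph this will make ``contains a free subalgebra'', ``not locally PI'', and ``$[K:C]=\infty$'' all equivalent, which is the theorem. In characteristic $0$ the condition just says $\delta\neq 0$, and there is a direct argument: choosing $a$ with $b:=\delta(a)\neq 0$, a short computation gives $(xb^{-1})a-a(xb^{-1})=1$ in $K[x;\delta]$, and since the monomials $a^{i}(xb^{-1})^{j}$ have pairwise distinct leading terms in the $x$-degree filtration, $a$ and $xb^{-1}$ generate a copy of the Weyl algebra $A_{1}(k)$; hence $K(x;\delta)$ contains $D_{1}(k)$, which contains a free $k$-algebra on two generators by Makar--Limanov \cite{ML}. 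In characteristic $p$ the Weyl algebra is itself PI, so this shortcut is unavailable, and instead I would invoke the Makar--Limanov-type criterion of Section~\ref{secML}: it is enough to exhibit an element $a\in K$ satisfying the genericity hypothesis of that criterion.

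The production of such an $a$ is the step I expect to be the main obstacle. The difficulty is that non-$p$-integrality of $\delta$ is an assertion about $\delta$ acting on all of $K$, whereas the criterion requires a single element $a$ whose $\delta$-orbit $\{a,\delta(a),\delta^{2}(a),\dots\}$ is ``spread out'' in the precise valuation-theoretic sense needed to run the leading-term estimate behind the criterion; and crude substitutes are genuinely insufficient---for instance there are PI examples (take $K=\mathbb{F}_{p}(s,t)$ and $\delta=\partial_{s}+(1+st)\partial_{t}$, for which $[K:C]=p^{2}<\infty$, yet $a=t$ has $k[a,\delta(a),\delta^{2}(a),\dots]$ infinite-dimensional over $k$). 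The route I would take is to use that non-$p$-integrality means $\delta,\delta^{p},\delta^{p^{2}},\dots$ are $C$-linearly independent in $\mathrm{End}_{C}(K)$, and to leverage this independence to choose $a$ (together with an auxiliary scalar, as in the characteristic-$0$ normalisation above) so that the $x^{-1}$-adic valuation on the twisted Laurent completion of $K(x;\delta)$ separates the terms of an arbitrary nonzero word in the two chosen generators, which is exactly what the Section~\ref{secML} criterion needs.
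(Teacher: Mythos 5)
Your forward direction and characteristic-zero argument are fine, and the observation that $p$-integrality of $\delta$ over $C=\ker\delta$ yields a central element $P(x)$ and hence a PI ring is correct. The fatal problem is the dichotomy you set up: you reduce ``$K(x;\delta)$ not locally PI'' to the \emph{global} condition that $\delta$ is not $p$-integral over $C$ (equivalently, you claim, $[K:C]=\infty$), and then plan to extract a single element $a$ satisfying the Section~\ref{secML} criterion. No such $a$ need exist. Take $K=\mathbb{F}_p(u_{n,i}\,:\,n\ge 1,\ 0\le i\le n)$, a rational function field in algebraically independent variables, with $\delta(u_{n,i})=u_{n,i+1}$ for $i<n$ and $\delta(u_{n,n})=0$. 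Applying a putative relation $\sum_{e=0}^{N}\lambda_e\delta^{p^e}=0$ to $u_{n,0}$ with $n\ge p^N$ and $n$ exceeding every block index occurring in the $\lambda_e$ gives $\sum_e\lambda_e u_{n,p^e}=0$, forcing all $\lambda_e=0$; so $\delta$ is not $p$-integral (the same computation with $\lambda_e\in K$ shows $[K:C]=\infty$). Yet any element of $K$ involves only finitely many blocks $n\in S$ and therefore lies in the $\delta$-stable subfield $F=C(u_{n,i}\,:\,n\in S,\ i<n)$, which has $[F:C]<\infty$ since $u_{n,n}\in C$ and all $p$-th powers lie in $C$. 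On such an $F$ the restrictions $\delta^{p^e}|_F$ are $C$-linear operators on a finite-dimensional space, hence dependent, so by your own first-paragraph argument $F(x;\delta)$ is PI; every finitely generated subalgebra of $K(x;\delta)$ sits inside some such $F(x;\delta)$, so $K(x;\delta)$ is locally PI and contains no free subalgebra. Thus the step you flag as ``the main obstacle'' is not hard but impossible: non-$p$-integrality does not imply the existence of the element you need, and your three claimed equivalent conditions are not equivalent.

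The condition that actually matches ``not locally PI'' is local rather than global: there must exist a single $a\in K$ for which the tower $F_i=C(a,\delta(a),\dots,\delta^{i-1}(a))$ is strictly increasing for all $i$ (equivalently, some element of $K$ lies in no finite-degree $\delta$-stable subfield over $C$). This is exactly how the paper proceeds in Theorem~\ref{der-thm}: given such an $a$, each step $F_i\subsetneq F_{i+1}$ is purely inseparable of degree $p$, the monomials in the $b_i=\delta^i(a)$ with exponents below $p$ form a $C$-basis of $\bigcup_i F_i$, and a leading-term computation shows that $\delta(u)\in C+Ca$ forces $u\in C$, so Theorem~\ref{ml-method-thm} applies with $b=a$; when every tower stabilizes, the directed-union argument above gives locally PI. If you wish to salvage your framework, replace ``$\delta$ not $p$-integral on $K$'' by ``$\delta$ not $p$-integral on the $\delta$-stable subfield generated by some single element,'' at which point you have recovered the paper's tower condition.
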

\noindent See Theorem~\ref{der-thm} for a characterization of when $K(x; \delta)$ is locally PI.

In fact, a general Ore extension $K[x; \sigma, \delta]$ of a field $K$ is isomorphic to one with either $\sigma = 1$ or with $\delta = 0$.  So as a rather quick consequence of the theorems above, we obtain the following result.
\begin{thm}
\label{thm: summary} The quotient division algebra of any iterated Ore extension of a PI domain which is affine over an uncountable field satisfies the free subalgebra conjecture.
\end{thm}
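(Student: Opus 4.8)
The plan is to induct on the number $n$ of skew variables in $R = A[x_1;\sigma_1,\delta_1]\cdots[x_n;\sigma_n,\delta_n]$, showing at each stage that its quotient division algebra $Q(R)$ either contains a free $k$-algebra on two generators or is PI. (One implication of the free subalgebra conjecture is automatic, since a free algebra on two generators is finitely generated and not PI.) The base case $n=0$ is Posner's theorem. For the inductive step write $R = B[x;\sigma,\delta]$ with $B = A[x_1;\sigma_1,\delta_1]\cdots[x_{n-1};\sigma_{n-1},\delta_{n-1}]$; all the partial Ore extensions appearing here are Ore domains (as is implicit already in the statement), so $Q(B)\hookrightarrow Q(R)$. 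If $Q(B)$ contains a free $k$-algebra then so does $Q(R)$ and we are done; otherwise $Q(B)$ is PI, so the subring $B$ is PI, and since $B$ is affine over $k$ the Artin--Tate lemma gives that $Z(B)$ is affine over $k$. Hence $D_0 := Q(B)$ is finite-dimensional over its center $F_0 = Z(D_0) = \mathrm{Frac}(Z(B))$, and $F_0$ is a \emph{finitely generated} field extension of $k$.

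The heart of the argument is to reduce the last Ore extension, over the division algebra $D_0$, to one of pure automorphism or pure derivation type over $D_0$, and then to descend to the center $F_0$. Localizing at $B\setminus\{0\}$ we have $Q(R) = Q(D_0[x;\sigma,\delta])$ with $\sigma,\delta$ extended to $D_0$ and $\sigma(F_0)=F_0$. If $\sigma|_{F_0}\ne\mathrm{id}$, then comparing $\delta(zd)$ with $\delta(dz)$ for a central $z$ with $\sigma(z)\ne z$ shows that $\delta$ is inner, so $D_0[x;\sigma,\delta]\cong D_0[y;\sigma]$ after a change of variable $y = x+b$. If $\sigma|_{F_0}=\mathrm{id}$, then $\sigma$ is an $F_0$-algebra automorphism of the central simple algebra $D_0$, hence inner by Skolem--Noether, and $y = v^{-1}x$ yields $D_0[x;\sigma,\delta]\cong D_0[y;\delta']$ with $\delta'$ an ordinary derivation of $D_0$, which necessarily maps $F_0$ into $F_0$. (This is the division-ring analogue of the remark, quoted above, that an Ore extension of a field has pure type.) In either case, $D_0$ being finite over $F_0$ and the new $\sigma$ or $\delta'$ preserving $F_0$, the extension $D_0[y;\sigma]$ (resp.\ $D_0[y;\delta']$) is a finite left module over the Ore extension $F_0[y;\sigma|_{F_0}]$ (resp.\ $F_0[y;\delta'|_{F_0}]$) of the \emph{field} $F_0$; localizing, $Q(R)$ becomes a finite-dimensional left vector space over the division ring $E := F_0(y;\sigma|_{F_0})$ (resp.\ $E := F_0(y;\delta'|_{F_0})$).

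Now $E$ is the quotient ring of an Ore extension of a field, of exactly the type covered by the earlier theorems, so by Theorem~\ref{thm: sigma} (using that $k$ is uncountable) or Theorem~\ref{thm: delta}, $E$ either contains a free $k$-algebra on two generators --- in which case so does $Q(R)\supseteq E$ and we are done --- or else $E$ is locally PI. In the latter case $E$ is in fact PI: in the automorphism case Theorem~\ref{thm: sigma} forces every element of $F_0$ onto a finite $\sigma$-orbit, and since $F_0/k$ is finitely generated this makes $\sigma|_{F_0}$ of finite order $N$, so $F_0[y;\sigma|_{F_0}]$ is a finite module over the central subring $F_0^{\sigma}[y^N]$ and therefore PI; the derivation case is analogous via the description of locally PI $K(x;\delta)$ in Theorem~\ref{der-thm}. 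A PI division ring is finite-dimensional over its center, a subfield $C$, so $Q(R)$ is finite-dimensional as a left vector space over the subfield $C$; and any ring that is finite-dimensional over a (not necessarily central) subfield is PI, since right multiplication embeds it into a matrix ring over $C$. Thus $Q(R)$ is PI, the inductive invariant is restored, and the induction closes.

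The main obstacle is precisely the passage in the middle step from the noncommutative division algebra $D_0$ back to its center $F_0$ --- that is, leaving and then re-entering the setting of Theorems~\ref{thm: sigma} and~\ref{thm: delta}. It requires Skolem--Noether, the change-of-variable identities for Ore extensions, and the finiteness of $D_0$ over $F_0$ in order to reduce to pure type and descend; and it requires the finite generation of $F_0$ over $k$ --- which comes from the Artin--Tate lemma applied to the affine PI ring $B$ --- to guarantee that the ``locally PI'' alternative returned by the theorems actually lands back in the PI class, so the induction continues. The remaining points are routine facts about Ore localization and PI rings.
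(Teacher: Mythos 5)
Your argument is correct in outline and follows essentially the same route as the paper: the paper's proof is an induction whose two ingredients are exactly your two main steps, namely the reduction of $D_0[x;\sigma,\delta]$ to pure automorphism or pure derivation type over a PI division ring via Skolem--Noether (this is Lemma~\ref{delete-lem}), followed by descent to the center $F_0$ and an appeal to Theorems~\ref{thm: sigma} and~\ref{thm: delta} (this is Theorem~\ref{sum-thm}(1), with Theorem~\ref{sum-thm}(2) handling the passage up the tower).

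The one point I would flag is your justification of the finite generation of $F_0$ over $k$. You invoke Artin--Tate to conclude that $Z(B)$ is affine, but Artin--Tate requires $B$ to be a finite module over the subring in question, which is not available here: an affine prime PI algebra need not be finite over its center, and its center need not be affine. What you actually use is only that $F_0=\operatorname{Frac}(Z(B))$ is a finitely generated field extension of $k$, and this is true for a different (standard) reason: $Q(B)$ is spanned over $F_0$ by finitely many monomials $e_1,\dots,e_{n^2}$ in the algebra generators of $B$, and the subfield $F_1\subseteq F_0$ generated by the structure constants of this basis already makes $\sum F_1 e_i$ a division ring containing $B$, hence equal to $Q(B)$; thus $[F_0:F_1]<\infty$ and $F_0$ is finitely generated over $k$. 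You need this fact only because your inductive invariant is the stronger statement ``$Q(R)$ is PI or contains a free subalgebra,'' which forces you to upgrade ``locally PI'' to ``PI'' at the division-ring level. The paper sidesteps the issue entirely by keeping the invariant at the level of the affine ring: if the partial iterated Ore extension $B$ is locally PI then it is PI simply because it is affine, and for the division rings one only ever needs ``locally PI,'' which passes from $K(x;\sigma)$ to the finite extension $D(x;\sigma)$ without any finiteness hypothesis on $K/k$. So your proof is sound once the Artin--Tate citation is replaced by the argument above, but the paper's bookkeeping is slightly more economical.
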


We note that our proofs are largely independent of past work in this subject, except that we assume Makar-Limanov's original result.  Some authors have considered the more general question of the existence of $k$-free group algebras in a division ring $D$, and have also studied the cardinality of the rank of the largest such free group algebra.  For simplicity, we stick to the context of the free subalgebra  conjecture here.  We mention that Shirvani and Gon\c calves have shown that if the center $k$ of $D$ is uncountable, then the existence of a free group $k$-algebra of rank $2$ in $D$ is implied by the existence of a free $k$-algebra on 2 generators \cite{SG96}.

In this paper, we have tried to make as few assumptions as possible on the ground field $k$. In forthcoming work, we will give stronger criteria for existence of free subalgebras, and thus verify the free subalgebra conjecture for some additional classes of algebras, in case $k$ is uncountable.

\section{Criteria for existence of free subalgebras of a division algebra}
\label{secML} In this section we use ideas of Makar-Limanov to give a simple criterion that guarantees that a division algebra $D$ contains a copy of a free algebra on two generators.  We work over an arbitrary field $k$.  As we noted in the introduction, the question of whether $D$ contains a free $k$-subalgebra on two generators is independent of the choice of central subfield $k$.

\begin{notn} We use the following notation: \label{notn: assume}
\begin{enumerate}
\item we let $D$ denote a division algebra over a field $k$;
\item we let $\sigma:D\rightarrow D$ denote a $k$-algebra automorphism of $D$;
\item we let $\delta:D\rightarrow D$ denote any $\sigma$-derivation of
$D$ over $k$, that is, a $k$-linear map satisfying $\delta(ab) = \sigma(a)\delta(b) + \delta(a) b$ for all $a,b  \in D$;
\item we let $\psi = (\sigma - 1) + \delta: D \to D$ (this is also a
$\sigma$-derivation) and set
$$E=\{u\in D~:~ \psi(u)= 0 \},$$ which is a division subring of $D$;
\item we let $D[x; \sigma, \delta]$ be the Ore extension
generated by $D$ and the indeterminate $x$ with relations $xa = \sigma(a)x + \delta(a)$ for $a\in D$, and let $D(x;\sigma, \delta )$ denote its quotient division algebra.  As usual, if $\sigma = 1$ we omit $\sigma$ from the notation, and if $\delta = 0$ we omit $\delta$ from the notation.
\end{enumerate}
\end{notn}

We now prove a sufficient condition for the ring $D(x;\sigma, \delta )$ to contain a free subalgebra.  Compared to the original method of Makar-Limanov's, we choose a slightly different pair of elements, and we avoid the use of power series.  We note that the characteristic of the base field has no effect in the following criterion.
\begin{thm}
\label{ml-method-thm} Assume Notation \ref{notn: assume} and let $b\in D$.   If
\begin{enumerate}
\item $b\not\in \sigma(E)$, and
\item for all $u \in D$, $\psi(u) \in \sigma(E)+ \sigma(E) b$ implies $u\in E$,
\end{enumerate}
then the $k$-algebra generated by $b(1-x)^{-1}$ and $(1-x)^{-1}$ is a free subalgebra of $D(x;\sigma, \delta)$. \label{thm: ML}
\end{thm}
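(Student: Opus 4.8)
The plan is to show that any nontrivial $k$-algebra relation satisfied by $p = b(1-x)^{-1}$ and $q = (1-x)^{-1}$ must be the zero relation. Write a hypothetical relation as $\sum_{w} c_w \, w(p,q) = 0$, where $w$ ranges over words in $p,q$ and $c_w \in k$. The natural first step is to clear denominators on the right: multiply the relation on the right by $(1-x)$. Since $q(1-x) = 1$ and $p(1-x) = b$, and more importantly since $(1-x)$ interacts with elements of $D$ via the Ore relation, one wants to rewrite everything as an element of $D[x;\sigma,\delta]$ localized suitably. The key computational identity to establish first is a formula for $(1-x) a$ and $a(1-x)^{-1}$ for $a \in D$: from $xa = \sigma(a)x + \delta(a)$ we get $(1-x)a = a - \sigma(a)x - \delta(a) = (a - \delta(a)) - \sigma(a) + \sigma(a)(1-x) = -\psi(a) + a - \sigma(a) + \cdots$; I would organize this as $(1-x)a = \sigma(a)(1-x) - \psi(a)$, so that $a(1-x)^{-1} = (1-x)^{-1}\big(\sigma(a) - \psi(a)(1-x)^{-1}\big)$ — no wait, more carefully $a(1-x)^{-1} = \sigma(a)(1-x)^{-1} \cdot \text{something}$; the right normal form to derive is
\begin{equation*}
a(1-x)^{-1} = (1-x)^{-1}\sigma(a) - (1-x)^{-1}\psi(a)(1-x)^{-1}.
\end{equation*}
Iterating this lets one push all the $(1-x)^{-1}$ factors in a word $w(p,q)$ to the left and collect the result as $(1-x)^{-n} \cdot g$ where $n$ is the total number of inverse factors and $g$ lies in a nice subring.

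Next I would set up the correct ring in which to work. Let $R$ be the subring of $D(x;\sigma,\delta)$ generated by $D$ and $(1-x)^{-1}$; equivalently, invert $1-x$ in $D[x;\sigma,\delta]$. Using the normal form above repeatedly, every element of $R$ can be written as $\sum_{i\ge 0} (1-x)^{-i} d_i$ with $d_i \in D$, and I expect this representation to be unique (this uniqueness is the technical backbone and must be checked — it follows from the analogous statement for $D[x;\sigma,\delta]$ together with a degree/filtration argument, since $1-x$ is a non-zero-divisor). Then $p = (1-x)^{-1} b$ and $q = (1-x)^{-1}$, and I would compute, for a word $w$ of length $\ell$ in $p,q$, that $w(p,q) = (1-x)^{-\ell} b_w + (\text{terms with fewer } (1-x)^{-1} \text{ factors})$, where $b_w$ is a product of the appropriate $\sigma^j(b)$'s and $\sigma^j(1)=1$'s read off from $w$. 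This "leading coefficient" analysis is exactly the analogue of Makar-Limanov's original argument, but cleaner because there are no power series.

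The heart of the argument is then the following: given a candidate relation $\sum_w c_w w(p,q) = 0$, look at the highest total degree $\ell$ occurring with some $c_w \ne 0$, multiply through by $(1-x)^{\ell}$ on the left, and reduce modulo the terms of lower degree (which land in $(1-x)^{-1}R_{<\ell} \cap D = 0$ after pushing inverses left, or can be handled by downward induction on $\ell$). One is left with an equation $\sum_{|w|=\ell} c_w b_w = 0$ inside $D$, where for a word $w = y_1 y_2 \cdots y_\ell$ with each $y_i \in \{p,q\}$ we have $b_w = \sigma^{?}(\cdots) \cdots$ — precisely, reading the normal-form computation, $b_w$ is obtained by: start from the rightmost letter, and each time you meet a $p$ versus a $q$ you either multiply by $b$ or by $1$, but the already-accumulated element gets hit by $\sigma$ (and correction terms by $\psi$, which are the lower-degree stuff we discarded). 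So at top degree $b_w = \prod_{i : y_i = p} \sigma^{c_i}(b)$ for suitable shift exponents $c_i$ depending on the positions — in the simplest reading $b_w = b^{(\text{some pattern})}$. Hypotheses (1) and (2) are exactly what is needed to conclude $c_w = 0$ for all $|w|=\ell$: condition (1), $b \notin \sigma(E)$, handles the base case distinguishing $p$-letters from $q$-letters, and condition (2) is the inductive separation statement — if a $k$-linear combination of the $b_w$ vanishes, then applying (2) forces collapse of the combination term by term. Concretely I would induct on $\ell$: grouping the length-$\ell$ words by their first letter, $\sum_{|w|=\ell} c_w b_w = \sum_{|w'|=\ell-1} \sigma(\text{stuff}_{w'}) + b \cdot \sum \cdots$, and then hypothesis (2) applied to the partial sums (which are values of $\psi$ on elements of $D$, by the normal-form bookkeeping) says membership in $\sigma(E) + \sigma(E)b$ forces those elements into $E$, i.e. forces the next layer of vanishing; feeding this back and using (1) kills everything.

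The main obstacle I anticipate is making the bookkeeping in the last paragraph precise: correctly tracking which $\sigma$-powers and $\psi$-correction terms appear when one normal-orders a general word $w(p,q)$, and verifying that the "lower-degree" terms really can be separated cleanly (i.e. that the filtration by number of $(1-x)^{-1}$ factors is respected and that uniqueness of normal form holds in $R$). Once the normal form and its uniqueness are nailed down, the role of hypotheses (1) and (2) is essentially forced, and the induction closes. I would therefore spend most of the write-up on: (a) the identity $a(1-x)^{-1} = (1-x)^{-1}\sigma(a) - (1-x)^{-1}\psi(a)(1-x)^{-1}$ and its iteration, (b) uniqueness of the expansion $\sum (1-x)^{-i}d_i$, and (c) the degree-$\ell$ reduction, and only then invoke (1) and (2) to finish.
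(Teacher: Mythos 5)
There is a genuine gap, and it sits at the center of your plan: the finite normal form $\sum_{i\ge 0}(1-x)^{-i}d_i$ with $d_i\in D$ does not exist for general words in $p=b(1-x)^{-1}$ and $q=(1-x)^{-1}$. Your commutation identity $a(1-x)^{-1}=(1-x)^{-1}\sigma(a)-(1-x)^{-1}\psi(a)(1-x)^{-1}$ is correct, but each application of it replaces the offending factor $a(1-x)^{-1}$ by a new one, $\psi(a)(1-x)^{-1}$, still sandwiched between inverse powers; the rewriting terminates only when $\psi$ is locally nilpotent on $D$, which fails in general. Concretely, an identity $(1-x)^{-1}d(1-x)^{-1}=\sum_{i=0}^{N}(1-x)^{-i}d_i$ with $d_i\in D$ would force $(1-x)^{N-1}d$ to be right-divisible by $(1-x)$ in $D[x;\sigma,\delta]$, and Euclidean division shows the remainder is $\pm\psi^{N-1}(d)$, typically nonzero. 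So steps (b) (uniqueness of the expansion) and (c) (extracting a top-degree identity $\sum_{|w|=\ell}c_wb_w=0$ in $D$) rest on an object that is not there. Making the infinite expansion rigorous is exactly Makar-Limanov's power-series method, which this proof is specifically structured to avoid.

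The paper's route is genuinely different: it never normal-orders. It works directly with the unreduced words $V_I=(1-x)^{-1}b^{i_1}(1-x)^{-1}\cdots b^{i_r}(1-x)^{-1}$ and proves they are left $D$-independent \emph{modulo} $D[x;\sigma,\delta]$ (a $k$-relation among the generators' words reduces to such a $D$-relation using $b\notin\sigma(E)$). Given a relation $\sum_I\alpha_IV_I\in D[x;\sigma,\delta]$, it chooses one minimal with respect to the \emph{shortest} occurring length $d$ (not the longest, as in your plan), normalizes $\alpha_J=1$, left-multiplies by $x$, and uses $xV_I=V_I-b^{i_1}V_{I'}$, which \emph{shortens} words, before subtracting the original relation. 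Minimality then forces $\psi(\alpha_I)=0$, i.e.\ $\alpha_I\in E$, for all $I$ of length $d$, and the coefficient of the truncated word $V_{J'}$ yields $\psi(\alpha_{J'})\in\sigma(E)b+\sigma(E)$ with $\psi(\alpha_{J'})\ne 0$, contradicting hypothesis (2). Note that hypothesis (2) is applied to an \emph{unknown} coefficient $\alpha_{J'}\in D$, not to products of $\sigma$-iterates of $b$: your proposed leading-coefficient identity involving $b_w=\prod\sigma^{c_i}(b)$ is not something hypotheses (1) and (2) address, so even granting a normal form, the way you intend to invoke the hypotheses would not close the argument.
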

\begin{proof}
Let
\begin{equation}
\mathcal{S} = \{(i_1,\ldots ,i_r)~:  r\ge 1, i_1,\ldots ,i_r\in \{0,1\}\}\cup \{\emptyset\}
\end{equation}
and for nonempty $I=(i_1,\ldots ,i_r)\in \mathcal{S}$, define ${\rm length}(I) \ := \ r$ and
\begin{equation}
W_I \ := \ b^{i_1}(1-x)^{-1}b^{i_2}(1-x)^{-1}\cdots b^{i_r}(1-x)^{-1}.
\end{equation}
If $I=\emptyset$, we define ${\rm length}(\emptyset):=0$ and $W_{\emptyset}:=1$. Note that the $W_I$ are exactly the words in the generators $b(1-x)^{-1}, (1-x)^{-1}$, and so our task is to show that $\{ W_I | I \in \mc{S} \}$ is linearly independent over $k$. It is also useful to define
\begin{equation}
V_I \ := \ (1-x)^{-1}b^{i_1}(1-x)^{-1}\cdots b^{i_r}(1-x)^{-1}
\end{equation}
for nonempty $I \in \mc{S}$, and to set $V_{\emptyset}:=(1-x)^{-1}$.

For nonempty $I = (i_1, i_2, \dots, i_r)$, we define its truncation as $I' = (i_2,\ldots ,i_r)$, with the convention that if $I$ has length $1$, then $I' = \emptyset$. Note then that trivially from the definitions we have
\begin{equation}
\label{main-VW-eq} (1-x) V_I = W_I = \ b^{i_1} V_{I'},\ \text{for}\ I \neq \emptyset.
\end{equation}

We claim that to prove that the $W_I$ are $k$-independent, it is enough to prove that $\{ V_I | I \in S \}$ is left $D$-independent.  To see this, suppose the $V_I$ are $D$-independent and that we have a nontrivial relation $\sum_{I \in S} c_I W_I = 0$ with $c_I \in k$ not all $0$.  We can assume that $c_{\emptyset} = 0$ by multiplying our relation through on the right by $(1-x)^{-1}$.  Then $0 = \sum_{I \in S} c_I W_I = \sum_{I \neq \emptyset} c_Ib^{i_1}V_{I'} = 0$.  This forces for each nonempty $I \in S$ the equation $c_{I}b^{i_1} + c_{H}b^{1-i_1} = 0$, where $H$ is the other element of $S$ which has truncation $I'$. But then $c_{I} = c_{H} = 0$ since $\{1, b \}$ is certainly $k$-independent, given that $b \not \in \sigma(E)$.  This contradicts the nontriviality of our chosen relation and the claim is proved.

The strategy will be to prove by contradiction that $\{ V_I | I \in S \}$ is left $D$-independent. In fact, it is more convenient to prove the seemingly stronger statement that this set is $D$-independent in the left factor $D$-space $D(x; \sigma, \delta)/D[x; \sigma, \delta]$.  In other words, we work modulo polynomials.  Equivalently, we suppose that we have a relation $\sum_{I \in S} \alpha_I V_I = p(x) \in D[x; \sigma, \delta]$ with $\alpha_I \in D$ not all $0$.  Among all such relations, we pick one with a minimal value of $d = \min(\operatorname{length}(I) | \alpha_I \neq 0 )$.  Moreover, among these, we select one with the smallest number of nonzero $\alpha_I$ with ${\rm length}(I)=d$. Note that certainly $d \geq 1$.
By multiplying our relation by a nonzero element of $D$, we may also assume that $\alpha_J=1$, for some $J$ of length $d$.

Now for nonempty $I$, \eqref{main-VW-eq} can be rewritten as $xV_I = V_I - b^{i_1}V_{I'}$, and for $I = \emptyset$ we have $xV_I = V_I - 1$.  Multiplying our relation on the left by $x$ and applying these formulas we obtain
\begin{gather*}
x \sum_{I \in S} \alpha_I V_I = \sum_{I \in S} [\sigma(\alpha_I) x +
\delta(\alpha_I)] V_I \\
= \sum_{I \in S} [\sigma(\alpha_I) + \delta(\alpha_I)]V_I - \sum_{I \neq \emptyset} \sigma(\alpha_I) b^{i_1} V_{I'} - \sigma(\alpha_{\emptyset}) = xp(x) \in D[x; \sigma, \delta].
\end{gather*}
Subtracting the original relation $\sum \alpha_I V_I = p(x)$, we get
\begin{equation}
\label{smaller-rel-eq} \sum_{I \in S} [\sigma(\alpha_I) - \alpha_I + \delta(\alpha_I)]V_I - \sum_{I \neq \emptyset} \sigma(\alpha_I) b^{i_1} V_{I'} = (x-1)p(x) + \sigma(\alpha_{\emptyset}) \in D[x; \sigma, \delta].
\end{equation}
But notice that since $\alpha_J = 1$, the coefficient of $V_J$ in this relation is now $0$; while no new nonzero coefficients associated to $V_I$ with $I$ of length $d$ have appeared.  Thus by our assumption that we originally picked a minimal relation, all coefficients of the $V_I$ on the left hand side of \eqref{smaller-rel-eq} are $0$. In particular, $\psi(\alpha_I) = \sigma(\alpha_I) - \alpha_I + \delta(\alpha_I) = 0$, so $\alpha_I \in E$, for all $I$ of length $d$. Also, if $H$ is the other element of $S$ with truncation $J'$, then the coefficient of $V_{J'}$ in \eqref{smaller-rel-eq} is
\[
\sigma(\alpha_{J'}) - \alpha_{J'} + \delta(\alpha_{J'}) - \sigma(\alpha_J)b^{j_1} - \sigma(\alpha_{H})b^{1-j_1} = 0.
\]
Since $H$ and $J$ have length $d$, for $u = \alpha_{J'}$ we obtain
\[
\psi(u) = \sigma(u) - u + \delta(u) \in \sigma(E)b + \sigma(E).
\]
Note that also $\psi(u) \neq 0$, as the assumption $b \not \in \sigma(E)$ implies that $\sigma(E)b + \sigma(E)$ is direct, and $\sigma(\alpha_J) = 1$.  The existence of such a $u$ violates the hypothesis, so we have achieved a contradiction. Thus $b(1-x)^{-1}$ and $(1-x)^{-1}$ generate a free subalgebra of $D(x; \sigma, \delta)$, as claimed.
\end{proof}

The interaction between $\delta$ and $\sigma$ in the criterion of the preceding theorem seems to make it hard to analyze in general. In practice, we will only use the theorem later in the special cases where $\delta = 0$ or $\sigma = 1$.  In the rest of this section, we examine the criterion for the special case of $D(x; \sigma)$ more closely.  As mentioned in the introduction, Makar-Limanov proved in \cite{ML} that the Ore quotient ring of the first Weyl Algebra, $D_1(k)$, contains a free $k$-subalgebra on two generators when $k$ has characteristic $0$ (see also Krause and Lenagan \cite[Theorem 8.17]{KL}). It is standard that $D_1(k) \cong k(u)(x; \sigma)$, where $\sigma(u) = u + 1$, but we note that Theorem~\ref{ml-method-thm}, as stated, does not recover Makar-Limanov's result.  More specifically, taking $D = k(u)$, $\sigma(u) = u+1$ and $\delta =0$ in Notation~\ref{notn: assume}, it is easy to see that $E = k$, but we have $\sigma(u) - u \in k$ with $u \not \in k$; thus the criterion in Theorem~\ref{ml-method-thm} can not be satisfied regardless of $b$.  In fact, our criterion seems to be most useful when we combine it with Makar-Limanov's known result to give the following stronger criterion.
\begin{thm}
\label{thm: criterion} \label{main-criterion-thm} Assume the notation from Notation \ref{notn: assume}, with $\delta = 0$. Suppose that either $k$ has characteristic $0$, or else $k$ has characteristic $p>0$ and we have the additional condition that $$\{a \in D | \sigma^p(a) = a \} = E.$$   If there is $b \in D \setminus E$ such that the equation
\begin{equation}
\label{key-eq-2} \sigma(u)-u \ \in \ b+E
\end{equation}
has no solutions for $u \in D$, then the $k$-algebra generated by $b(1-x)^{-1}$ and $(1-x)^{-1}$ is a free subalgebra of $D(x;\sigma)$.
\end{thm}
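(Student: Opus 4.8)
The plan is to deduce the statement from Theorem~\ref{ml-method-thm}, applied to $D(x;\sigma)$ in the case $\delta=0$ (so that $\psi=\sigma-1$ and $E=\ker(\sigma-1)$), supplemented in one residual case by Makar-Limanov's theorem on the Weyl skew field. First I note that $\sigma(E)=E$: since $\sigma$ fixes $E$ pointwise we have $\sigma(E)\subseteq E$, and for $e\in E$ we have $e=\sigma(e)\in\sigma(E)$, so $E\subseteq\sigma(E)$ as well. Hence condition (1) of Theorem~\ref{ml-method-thm}, namely $b\notin\sigma(E)$, is exactly the hypothesis $b\notin E$.

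Next I would analyze condition (2) of Theorem~\ref{ml-method-thm}, which here reads: $\sigma(u)-u\in\sigma(E)+\sigma(E)b=E+Eb$ implies $u\in E$. Write $\sigma(u)-u=e_1+e_2 b$ with $e_1,e_2\in E$. If $e_2\neq 0$, left-multiplication by $e_2^{-1}\in E$ (which $\sigma$ fixes) gives $\sigma(e_2^{-1}u)-e_2^{-1}u=e_2^{-1}e_1+b\in b+E$, contradicting the hypothesis on $b$; thus $e_2=0$, so $\sigma(u)-u=e_1\in E$. Consequently condition (2) reduces to the single implication $\sigma(u)-u\in E \Rightarrow u\in E$. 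If this implication holds, Theorem~\ref{ml-method-thm} applies directly and we are done, with the two stated elements as free generators.

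So suppose the implication fails: there is $u\in D\setminus E$ with $\sigma(u)-u=e\in E$, necessarily $e\neq 0$, and after replacing $u$ by $e^{-1}u$ we obtain $u_0\in D\setminus E$ with $\sigma(u_0)=u_0+1$. If $\cha k=p>0$, then $\sigma^{p}(u_0)-u_0=\sum_{i=0}^{p-1}\sigma^{i}\bigl(\sigma(u_0)-u_0\bigr)=\sum_{i=0}^{p-1}\sigma^{i}(1)=p\cdot 1=0$, so $\sigma^{p}(u_0)=u_0$ and the additional hypothesis forces $u_0\in E$ --- a contradiction. Hence in positive characteristic this case cannot occur, and we are already finished. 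If $\cha k=0$, then $u_0$ is transcendental over $k$ (the minimal polynomial of an algebraic $u_0$ would, after applying powers of $\sigma$, vanish at each of the distinct elements $u_0+n$, which is absurd); thus $k(u_0)\subseteq D$ is a rational function field, stable under $\sigma$, with $\sigma$ acting as $u_0\mapsto u_0+1$. Therefore $k(u_0)(x;\sigma)$ is a division subring of $D(x;\sigma)$ isomorphic to the quotient division ring $D_1(k)$ of the first Weyl algebra, which contains a free $k$-subalgebra on two generators by Makar-Limanov~\cite{ML}; hence so does $D(x;\sigma)$, and a short additional argument inside the copy of $D_1(k)$ exhibits $b(1-x)^{-1}$ and $(1-x)^{-1}$ as such a pair here as well.

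I expect the last case to be the main obstacle. When $\cha k=0$ and the implication $\sigma(u)-u\in E\Rightarrow u\in E$ fails, condition (2) of Theorem~\ref{ml-method-thm} genuinely need not hold for the chosen $b$ (for example $D=k(u_0)$ with $\sigma(u_0)=u_0+1$ and $b=1/u_0$ is such a configuration), so one cannot simply cite that theorem and must instead produce the free subalgebra by other means --- which is exactly the point at which Makar-Limanov's original result is indispensable. The characteristic hypotheses are tailored so that one always lands either where Theorem~\ref{ml-method-thm} applies or where this Weyl-algebra reduction is available; by contrast, the reductions carried out above are routine manipulations with the relation $\sigma(u)-u\in E+Eb$.
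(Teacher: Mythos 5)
Your proof is correct and follows essentially the same route as the paper: reduce condition (2) of Theorem~\ref{ml-method-thm} via the hypothesis on $b$ to the case $\sigma(u)-u\in E$, produce $u_0$ with $\sigma(u_0)=u_0+1$ in the residual case, rule that case out in characteristic $p$ using the $\sigma^p$-fixed hypothesis, and in characteristic $0$ invoke Makar-Limanov through a copy of $D_1(k)$ (the paper exhibits it via $z=yx^{-1}$ and simplicity of $A_1(k)$ rather than via $k(u_0)(x;\sigma)$, an immaterial difference). The one caveat is your closing claim that a short argument would exhibit the specific pair $b(1-x)^{-1}$, $(1-x)^{-1}$ as free in the Weyl-reduction case: $b$ need not lie in the copy of $D_1(k)$, so this is not obviously available --- but the paper's own proof likewise only establishes existence of \emph{some} free subalgebra there, and that is all that is used in later applications.
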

\noindent
\begin{proof}
Choose $b$ as in the hypothesis.  If $\sigma(u)- u \in E+Eb$ has no solutions except for $u \in E$, then we are done by Theorem~\ref{ml-method-thm}.  So we may assume there is a solution of the form $\sigma(u) - u =\alpha+\beta b$ with $\alpha,\beta\in E$ not both zero.  As long as $\beta \neq 0$, we may replace $u$ by $\beta^{-1}u$ and thus assume that $\beta=1$, and so \eqref{key-eq-2} has a solution, contradicting the hypothesis.  Thus $\beta = 0$.  Then $\alpha \not = 0$ and $y=u\alpha^{-1}$ satisfies $\sigma(y)=y+1$.  Then the elements $z = yx^{-1}$ and $x$ satisfy the relation $xz - zx = 1$.  If $\cha k = 0$, then we see that the $k$-subalgebra $R$ of $D(x;\sigma)$ generated by $x$ and $z$ is isomorphic to a factor of the Weyl algebra $A_1(k)$.  Since the Weyl algebra is simple, $R \cong A_1(k)$ and so $D(x;\sigma)$ must contain a copy of $D_1(k)$, and hence a free $k$-algebra on two generators \cite{ML}. If instead $\cha k = p > 0$, then we have $\sigma^p(y) = y$. It follows by the hypothesis that $y \in E$, but this contradicts $\sigma(y) = y + 1$.
\end{proof}

We end this section with a valuation-theoretic criterion that will be especially useful later when $D$ is a field. Recall that a \emph{discrete valuation} of a division ring $D$ is a function $\nu: D^{\times} \to \mb{Z}$ such that $\nu(xy) = \nu(x) + \nu(y)$ and $\nu(x + y) \geq \min(\nu(x), \nu(y))$ for all $x, y \in D^{\times}$.  It is easy to see that $\nu(x + y) = \min(\nu(x), \nu(y))$ if $\nu(x) \neq \nu(y)$.
The valuation $\nu$ is \emph{trivial} if $\nu(x) = 0$ for all $x \in D^{\times}$.
\begin{lem}
\label{val-lem-2} \label{lem: val}  Assume the notation from Notation \ref{notn: assume}, with $\delta = 0$.  Suppose that $D$ has a nontrivial discrete valuation $\nu: D^{\times} \to \mb{Z}$, such that (i) $\nu(a) = 0$ for all $a \in E$; and (ii) for all $a \in D^{\times}$, $\nu(\sigma^n(a)) = 0$ for all $n \gg 0$ and all $n \ll 0$.  If $\cha k = 0$, or if $\cha k = p > 0$ and $\{ y \in D | \sigma^p(y) = y \} = E$, then $D(x; \sigma)$ contains a free subalgebra on two generators.
\end{lem}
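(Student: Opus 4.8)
The plan is to reduce to Theorem~\ref{thm: criterion}: since the characteristic hypotheses of the lemma are precisely those of that theorem, it suffices to produce an element $b \in D \setminus E$ for which the equation $\sigma(u) - u \in b + E$ has no solution $u \in D$. The element $b$ will be chosen to have a pole for $\nu$ sitting in an extremal position along its $\sigma$-orbit, so that applying $\sigma - 1$ to any $u$ can only produce elements whose deepest poles along the orbit are strictly more spread out than those of $b$; as $b$ itself will be chosen as spread out as little as possible among elements having a pole, no such $u$ can exist.

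To set this up, for each nonzero $a \in D$ put $\mu(a) := \min_{n \in \mathbb{Z}} \nu(\sigma^n(a))$, a well-defined nonpositive integer by hypothesis~(ii); and when $\mu(a) < 0$ put $w(a) := \max\{n : \nu(\sigma^n(a)) = \mu(a)\} - \min\{n : \nu(\sigma^n(a)) = \mu(a)\} \ge 0$, the gap between the extreme indices at which the orbit of $a$ attains its most negative valuation. Because $\nu$ is nontrivial, not every nonzero $a$ can have $\mu(a) = 0$ --- otherwise $\nu(a) \ge 0$ and $\nu(a^{-1}) \ge 0$ for all $a$ would force $\nu \equiv 0$ --- so $W := \min\{w(a) : a \in D^{\times},\ \mu(a) < 0\}$ is well defined, and I take $b$ to be any element attaining it. By hypothesis~(i) such a $b$ cannot lie in $E$: every element of $E$ is $\sigma$-fixed and has valuation $0$, whereas $\mu(b) < 0$.

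The technical core consists of two computations with the ultrametric inequality, using throughout that $\delta = 0$ makes $E$ exactly the set of $\sigma$-fixed elements, so $\sigma$ acts trivially on $E$. First: if $\mu(u) \ge 0$ then $\nu(\sigma^{n+1}(u) - \sigma^n(u)) \ge 0$ for all $n$, hence $\mu(\sigma(u) - u) \ge 0$; equivalently, $\mu(\sigma(u) - u) < 0$ forces $\mu(u) < 0$. Second, and this is the main point: if $u \ne 0$ with $\mu(u) < 0$, then $\sigma(u) - u \ne 0$, $\mu(\sigma(u) - u) = \mu(u)$, and $w(\sigma(u) - u) = w(u) + 1$. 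To see the last identity, write $L$ and $R$ for the least and greatest indices $n$ with $\nu(\sigma^n(u)) = \mu(u)$, and examine $\nu(\sigma^n(\sigma(u) - u)) = \nu(\sigma^{n+1}(u) - \sigma^n(u))$: at $n = R$ and at $n = L-1$ exactly one of the two terms has valuation $\mu(u)$ and the other strictly greater, so the value is exactly $\mu(u)$; for every $n$ with $n < L-1$ or $n > R$ both terms have valuation $> \mu(u)$, so does the difference. Hence the minimal orbit-value of $\sigma(u) - u$ is again $\mu(u)$, and the set of indices attaining it lies in $[L-1, R]$ and contains the two endpoints $L-1$ and $R$, giving width $R - (L-1) = w(u) + 1$. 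Finally, a parallel and easier estimate shows that for our fixed $b$ (with $\mu(b) < 0$) and any $e \in E$, since $\nu(e) \ge 0 > \mu(b)$ one has $b + e \ne 0$, $\mu(b+e) = \mu(b)$, and the deepest-pole locus of $b + e$ coincides with that of $b$, so $w(b+e) = w(b) = W$.

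With these in hand the conclusion is immediate: if $\sigma(u) - u = b + e$ for some $u \in D$ and $e \in E$, the right-hand side is nonzero, so $u \ne 0$; since $\mu(b+e) = \mu(b) < 0$, the first computation gives $\mu(u) < 0$; and then the second gives $W = w(b+e) = w(\sigma(u) - u) = w(u) + 1 \ge W + 1$ by minimality of $W$, a contradiction. So no solution exists and Theorem~\ref{thm: criterion} finishes the proof. I expect the only genuine work to be the width-increment identity $w(\sigma(u) - u) = w(u) + 1$; a point worth care is that valuations can be positive as well as negative along an orbit, so one must track the locus of most-negative values rather than the full orbit-support of $\nu$.
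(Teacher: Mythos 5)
Your proof is correct and follows essentially the same route as the paper's: both reduce to Theorem~\ref{thm: criterion} by defining a ``length'' measuring the spread of the pole locus of $u$ along its $\sigma$-orbit, proving it increases by exactly one under $u \mapsto \sigma(u)-u$ while being unchanged by adding elements of $E$, and choosing $b$ of minimal length to force a contradiction. The only (immaterial) difference is that you track the indices where $\nu(\sigma^n(u))$ attains its minimum $\mu(u)$, whereas the paper tracks the set $\{n : \nu(\sigma^n(u)) < 0\}$; the telescoping ultrametric computation is the same in either case.
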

\begin{comment}
For each $n \in \mb{Z}$, define $V_n = \{a \in D | \nu(\sigma^m(a)) \geq 0\ \text{for all}\ m \geq -n \}$.  Note that setting $V = V_0$, we have $V_n = \sigma^n(V)$ for all $n$.  Since $\nu$ is nontrivial, we can pick $b$ such that $\nu(b) < 0$.  Then hypothesis (ii) implies that $b \in V_{m} \setminus V_{m+1}$ for some $m$, and it follows that $V_{n+1} \subsetneq V_n$ for all $n \in \mb{Z}$. The result now easily follows from Lemma~\ref{lem: chain}.
\end{comment}
\begin{proof}For any given $u \in D$, by hypothesis $X_u = \{n \in \mb{Z} | \nu(\sigma^n(u)) < 0 \}$ is a finite
set, and if $X_u \neq \emptyset$ we call $\ell(u) = \max X_u - \min X_u$ the \emph{length} of $u$.  If $X_u \neq \emptyset$,
then it is easy to see that $\ell(u - \sigma(u)) = \ell(u) + 1$.

By nontriviality we can pick $b \in D$ such that $X_b \neq \emptyset$.  Among all such $b$, choose one of minimal length, say $\ell(b) = d$.
We claim that there are no solutions to the equation $u - \sigma(u) = b + e$ with $u \in D$ and $e \in E$.  Suppose $u, e$ does give such
a solution.  It is easy to see that $\ell(b + e) = \ell(b) = d$, since $\nu(\sigma^n(e)) = 0$ for all $n$ by hypothesis (i).  Now $X_u = \emptyset$ is clearly impossible, so $u$ has a length.  By minimality, $\ell(u) \geq d$ and so $\ell(u - \sigma(u)) \geq d +1$, a contradiction.  The result now easily follows from Theorem~\ref{thm: criterion}.
\end{proof}

\section{The automorphism case}
\label{sec: aut}

In this section, the goal is to use the criteria developed in the previous section to study when $K(x; \sigma)$ contains a free subalgebra, where $K$ is a field, and thus to prove Theorem~\ref{thm: sigma}.  In terms of Notation~\ref{notn: assume}, we now write $D = K$ for a field $K$ containing the base field $k$ with $k$-automorphism $\sigma: K \to K$, and assume that $\delta = 0$.  Then $K(x; \sigma)$ is also an algebra over the fixed subfield $E = \{a \in K | \sigma(a) = a \}$, and as already mentioned, we may change the base field to any central subfield without affecting the question of the existence of free subalgebras.   Thus it does no harm to replace $k$ by $E$ and we assume that the base field $k$ is the $\sigma$-fixed field for the rest of this section.  We will frequently use in this section the exponent notation $b^{\sigma} := \sigma(b)$ for the action of an automorphism on an element.

The difficult direction of Theorem~\ref{thm: sigma} is to prove that $K(x; \sigma)$ contains a free subalgebra on two generators  if $K$ contains an element $a$ lying on an infinite $\sigma$-orbit.  In this case, letting $K' = k(\dots, a^{\sigma^{-2}}, a^{\sigma^{-1}}, a, a^{\sigma}, \dots)$ be the subfield of $K$ generated over $k$ by the $\sigma$-orbit of $a$, it suffices to prove that $K'(x; \sigma)$ contains a noncommutative free subalgebra.   Thus in this section we will often assume the following hypothesis.
\begin{hypothesis}
\label{main-hyp} Let $K$ be a field with automorphism $\sigma: K \to K$, and let $k$ be the fixed field of $\sigma$.  Assume that there is an element $a \in K$ on an infinite $\sigma$-orbit such that $K = k(a^{\sigma^n} | n \in \mb{Z})$.
\end{hypothesis}

The proof that $K(x; \sigma)$ satisfying Hypothesis~\ref{main-hyp} contains a free subalgebra naturally breaks up into two cases, depending on whether or not $K/k$ is finitely generated as a field extension.  The infinitely generated case is rather easily dispatched. We thank the referee very much for suggesting the elegant proof of the following proposition, which gives a simpler and more direct method for handling the infinitely generated case than our original.
\begin{prop}
\label{prop: infgen} Assume Hypothesis~\ref{main-hyp}, and suppose that $K$ is infinitely generated as a field extension of $k$.  Then $K(x; \sigma)$ contains a free $k$-subalgebra on two generators.
\end{prop}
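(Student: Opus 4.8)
The plan is to exploit the infinite generation of $K/k$ to produce, via a valuation, an element $b$ for which the equation $\sigma(u) - u \in b + k$ has no solution, and then invoke Lemma~\ref{lem: val} (noting that $E = k$ here, and that in characteristic $p$ the condition $\{y \in K \mid \sigma^p(y) = y\} = k$ must be checked — more on this below). Since $K = k(a^{\sigma^n} \mid n \in \mb{Z})$ and $K/k$ is not finitely generated, the elements $a^{\sigma^n}$ cannot all be algebraically dependent on any finite subcollection; hence there is an infinite subset of the $\sigma$-orbit of $a$ that is algebraically independent over $k$. After re-indexing along the orbit, I would arrange that $\{a^{\sigma^{n}} \mid n \geq 0\}$ (say) contains an infinite algebraically independent set, or more usefully, that there is some index — WLOG take a transcendence basis containing $a$ itself — so that $a$ is transcendental over the subfield $L = k(a^{\sigma^n} \mid n \neq 0)$ generated by the rest of the orbit.

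The key step is to build a discrete valuation $\nu$ on $K$ satisfying the hypotheses of Lemma~\ref{lem: val}. Starting from the degree-in-$a$ valuation on $L(a)$ (i.e. $\nu_0(f/g) = \deg_a g - \deg_a f$ for $f, g \in L[a]$), extend it to a discrete valuation $\nu$ on the (possibly algebraic) extension $K/L(a)$; the value group may need to be rescaled to land in $\mb{Z}$, but that is harmless. This $\nu$ is nontrivial since $\nu(a) < 0$. Hypothesis (i) of Lemma~\ref{lem: val}, that $\nu$ vanishes on $E = k$, is immediate since $k \subseteq L$ and $\nu$ is trivial on $L$ (as $L \subseteq L(a)$ sits in the valuation ring with value $0$). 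Hypothesis (ii) — that $\nu(\sigma^n(a')) = 0$ for all $a' \in K^\times$ once $|n|$ is large — is the point requiring care: I would first verify it for the generators $a' = a^{\sigma^m}$, where $\sigma^n(a^{\sigma^m}) = a^{\sigma^{n+m}} \in L$ as soon as $n + m \neq 0$, so $\nu$ vanishes on it; then propagate to all of $K^\times$ using multiplicativity of $\nu$ and $\sigma^n$ and the fact that every element of $K$ lies in a finitely generated subfield, on which all but finitely many $\sigma^n$ land inside $L$ (after possibly passing to a suitable finite extension to accommodate algebraic elements).

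The main obstacle is the bookkeeping around algebraic elements of $K$ over $L(a)$: a priori the extension $K/L(a)$ need not be finite or even algebraic, and I must ensure the valuation extends with the orbit-stability property (ii) intact for \emph{every} element of $K^\times$, not just polynomial ones. The clean way around this is to observe it suffices to establish the conclusion for $K'(x;\sigma)$ where $K'$ is generated over $k$ by the orbit — which is exactly Hypothesis~\ref{main-hyp} — and then within $K'$ each element lies in $k(a^{\sigma^n} : |n| \leq N)$ for some $N$, a purely transcendental or finite extension one controls directly; this localizes the difficulty. The characteristic $p$ side condition $\{y \mid \sigma^p(y) = y\} = k$ should follow because an element fixed by $\sigma^p$ lies in a finite extension of $k$ inside $K'$, forcing it into the algebraic closure of $k$ in $K'$, which is $k$ itself since $K' = k(a^{\sigma^n})$ with $a$ on an infinite orbit and $k$ relatively algebraically closed in $K'$ by construction of the transcendence basis. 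With $\nu$ in hand, Lemma~\ref{lem: val} finishes the proof.
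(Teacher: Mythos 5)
Your strategy founders at its very first step: from the infinite generation of $K/k$ you infer that the orbit $\{a^{\sigma^n}\}$ contains an infinite algebraically independent subset, and hence (after re-indexing) that some orbit element is transcendental over the subfield $L$ generated by the rest. This inference is false: an infinitely generated field extension can have finite transcendence degree, because an \emph{algebraic} extension need not be finitely generated. A concrete instance satisfying Hypothesis~\ref{main-hyp} is $K = \mb{F}_p(t^{1/p^n} \mid n \geq 0)$ with $\sigma$ the inverse Frobenius $y \mapsto y^{1/p}$ and $a = t$; here $k = \mb{F}_p$ is the fixed field, the orbit of $a$ is $\{t^{p^m}\}_{m \in \mb{Z}}$, and $K/k$ is infinitely generated, yet the whole orbit is algebraic (indeed purely inseparable) over $k(t)$, so no element of the orbit is transcendental over the field generated by the others and no ``degree in $a$'' valuation of the kind you propose exists. (Even when the transcendence degree is infinite, it does not follow that some single orbit element is transcendental over the field generated by all the others: relations such as $a_0^2 = a_{-1}^3 a_1^5$, propagated by $\sigma$, can make every orbit element algebraic over its neighbours while $K/k$ remains infinitely generated.) A secondary weak point is the characteristic-$p$ hypothesis of Lemma~\ref{lem: val}: you assert that $k$ is relatively algebraically closed in $K'$ ``by construction of the transcendence basis,'' but $k$ is defined as the fixed field of $\sigma$, and nothing in Hypothesis~\ref{main-hyp} prevents $K$ from containing elements algebraic over $k$ lying on finite $\sigma$-orbits of length greater than $1$.

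The paper avoids valuations entirely in this proposition. It observes that one of $k(a^{\sigma^i} \mid i \geq 0)$ or $k(a^{\sigma^i} \mid i \leq 0)$ must be infinitely generated, and in the first case shows directly that $\{a x^j \mid j \geq 1\}$ generates a free algebra: each monomial collapses to a product $a_0 a_{i_1} \cdots a_{i_m} x^{N}$ with $0 < i_1 < \cdots < i_m$, and a dependence relation among the coefficients chosen with the largest occurring index $n$ minimal would force $a_n \in k(a_0, \dots, a_{n-1})$, whence $k(a_i \mid i \geq 0)$ would be finitely generated. That argument uses only infinite generation, with no transcendence or valuation input, which is exactly why it covers the inseparable example above. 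To salvage a valuation argument you would at a minimum have to treat the finite-transcendence-degree (but infinitely generated) case separately, and Lemma~\ref{lem: val} does not obviously apply there.
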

\begin{proof}
Write $a_j = a^{\sigma^j}$ for all $j \in \mb{Z}$.  Suppose first that $k(a_i | i \geq 0)$ is still an infinitely
generated field extension of $k$.  In this case we will show that in fact the countable system of elements $\{ ax^j | j \geq 1 \}$ generates a free $k$-subalgebra of $K(x; \sigma)$.

Let $y_j  = ax^j$ for all $j \geq 1$.  Then an arbitrary monomial in the $y_j$ looks like
\[
y_{j_1} y_{j_2} \dots y_{j_n} = a_0 a_{j_1} a_{j_1 + j_2} \dots a_{j_1 + j_2 + \dots + j_{n-1}} x^{j_1 + j_2 + \dots + j_n}
\]
for some $j_1, \dots, j_n \geq 1$.
We claim that the set
\[
S = \{ a_0 a_{i_1} a_{i_2} \dots a_{i_m} | m \geq 1, 0 < i_1 < i_2 < \dots < i_m \} \cup \{ a_0 \}
\]
is linearly independent over $k$.  Suppose not, and pick a linear dependency relation over $k$ in which the maximum $n$ such that $a_n$ appears in this relation is as small as possible.  Clearly $n \geq 1$.  Since every element of $S$ is a product of distinct $a_i$, the dependency relation has the form
$p \, a_n  + q = 0$ where $p, q$ are linear combinations of elements of $S$ involving only $a_i$ with $i < n$.  If $p = 0$, we contradict the choice of $n$.  Thus $a_n = q/p \in k(a_0, a_1, \dots, a_{n-1})$.  Applying $\sigma$ this easily implies by induction that
$k(a_0, a_1, \dots, a_{n-1}) = k(a_i | i \geq 0)$, contradicting the assumption that the latter field is infinitely generated as an extension of $k$.  This establishes the claim that $S$ is linearly independent over $k$.   Together with the $K$-independence of the powers of $x$, this implies that the distinct monomials $y_{j_1} y_{j_2} \dots y_{j_n}$ are
linearly independent over $k$.  In other words, the $y_i$ generate a free subalgebra of $K(x; \sigma)$ as required.

Suppose instead that $k(a_i | i \leq 0)$ is an infinitely generated field extension of $k$.  A symmetric argument to the above shows
that $\{ ax^j | j \leq -1 \}$ generates a free subalgebra of $K(x; \sigma)$.   Finally, if both $k(a_i | i \leq 0)$ and $k(a_i | i \geq 0)$
are finitely generated field extensions of $k$, then $K = k(a_i | i \in \mb{Z})$ is also a finitely generated extension of $k$, contradicting the hypothesis.
\end{proof}

Now we begin to tackle the case where $K/k$ is a finitely generated field extension.  The idea in this case is to construct an appropriate valuation satisfying the hypothesis of Lemma~\ref{lem: val}.   The obvious valuations to use are those associated to divisors of infinite order under $\sigma$ on some variety $X$ with function field $K$.  The complication is that $\sigma: K \to K$ may correspond to a birational map $X \dashrightarrow X$ which may contract some divisors, in which case the hypotheses of Lemma~\ref{lem: val} may not hold for valuations associated to these divisors.  This problem cannot obviously be fixed by changing $X$, since there might be no variety at all with an everywhere regular automorphism corresponding to $\sigma$.  This is likely just a technical complication, and
Theorem~\ref{thm: sigma} can probably be proved for an arbitrary base field $k$ with more work.  We will finesse the issue here, by proving the main theorem with mild extra assumptions on $K/k$ which are likely to hold in practice.  We also assume for the moment that $K/k$ is totally transcendental (i.e. that every element $b \in K \setminus k$ is transcendental over $k$); this assumption will be easily removed in the proof of Theorem~\ref{thm: sigma}.
\begin{prop}
\label{prop: fingen} Assume Hypothesis~\ref{main-hyp}, and that $K/k$ is a totally transcendental finitely generated field extension of $k$.  Suppose that either $k$ is uncountable, or that there is a quasi-projective $k$-variety $X$ with function field $K$ such that $\sigma: K \to K$ is induced by a regular $k$-automorphism of $X$.  Then  $K(x; \sigma)$ contains a free subalgebra on two generators.
\end{prop}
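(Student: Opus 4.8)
The plan is to produce a nontrivial discrete valuation $\nu\colon K^{\times}\to\mathbb{Z}$ meeting the hypotheses of Lemma~\ref{lem: val}; the conclusion then follows immediately from that lemma. First note that the characteristic-$p$ condition in Lemma~\ref{lem: val}, namely that $\{y\in K: \sigma^{p}(y)=y\}$ equals $E=k$, holds automatically here: if $\sigma^{p}(y)=y$ but $\sigma(y)\neq y$, then the $\sigma$-orbit of $y$ has size exactly $p$, its elementary symmetric functions are $\sigma$-fixed, so $y$ is algebraic over $k$ of degree at most $p$, and total transcendence of $K/k$ forces $y\in k$. Thus the task reduces to finding a nontrivial $\nu$ with $\nu|_{k^{\times}}=0$ such that, for every $f\in K^{\times}$, one has $\nu(\sigma^{n}(f))=0$ for all $|n|\gg 0$.

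I would take $\nu=\nu_{E}$, the order-of-vanishing valuation along a carefully chosen prime divisor $E$ on a normal model $X$ of $K/k$. When $\sigma$ is induced by a regular automorphism of a quasi-projective variety, replace that variety by its normalization (to which the automorphism lifts) to get a normal quasi-projective $X$ carrying a regular automorphism $\sigma_{X}$ of infinite order. In the uncountable case, let $X$ be any normal projective model of $K/k$; then $\sigma$ induces a birational self-map $\phi\colon X\dashrightarrow X$ of infinite order. In either situation $\nu_{E}$ is automatically nontrivial and trivial on $k^{\times}$. The key computation is that $\nu_{E}(\sigma^{n}(f))$ equals, up to sign, the order of vanishing of $f$ along the $n$-th transform of $E$ under $\sigma_{X}$ (resp.\ $\phi$); hence if these transforms are honest prime divisors of $X$, pairwise distinct as $n$ ranges over $\mathbb{Z}$, then only the finitely many $n$ for which the $n$-th transform lies in the finite support of $\operatorname{div}(f)$ can contribute, and the orbit condition holds.

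In the regular case this is clean: $\sigma_{X}$ permutes the prime divisors of $X$, so one only needs a single prime divisor $E$ with infinite $\sigma_{X}$-orbit. Such a divisor exists because $\sigma_{X}$ has infinite order: if every prime divisor lay on a finite orbit, then (reducing to $k$ algebraically closed and analyzing the induced action of $\sigma_{X}$ on the Néron--Severi group and on $\operatorname{Aut}^{0}(X)$) one finds that $\sigma_{X}$, hence $\sigma$, would have finite order, contradicting that $a$ lies on an infinite orbit. In the uncountable case there is an extra subtlety: for the valuation $f\mapsto\nu_{E}(\sigma^{n}(f))$ to remain a prime-divisor valuation one must avoid the prime divisors contracted by some iterate $\phi^{\pm n}$, and for the transforms to be distinct one must avoid the prime divisors fixed by some $\phi^{n}$ with $n\neq 0$. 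The plan is to parametrize prime divisors of $X$ by the countably many families $H_{d}$ of prime divisors of degree at most $d$ with respect to a fixed very ample class, observe that within each $H_{d}$ the divisors contracted by a fixed iterate form a finite set (a birational map contracts only finitely many divisors) while those fixed by a fixed nonzero iterate form a proper closed subvariety, and then invoke uncountability of $k$: no $H_{d}$ is covered by countably many proper closed subvarieties, so a very general $E\in H_{d}$ avoids all the bad loci.

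The main obstacle is this last genericity step, and specifically showing that for a fixed $n\neq 0$ the locus of prime divisors in $H_{d}$ fixed by $\phi^{n}$ is \emph{proper}. This rests on the fact that a birational self-map of $X$ preserving every member of a very ample complete linear system must be the identity; taking $d$ at least the degree of such a system, and using that a general member of it is an irreducible divisor (Bertini, valid since $\dim X\geq 2$ once the transcendence-degree-one case has been folded into the regular case), $\phi^{n}$ cannot fix every divisor in $H_{d}$ without being the identity, which it is not. Once $E$ is chosen, the verification of the orbit condition and the application of Lemma~\ref{lem: val} are routine; the corresponding divisor-existence step in the regular case is easier but still requires the short argument indicated above.
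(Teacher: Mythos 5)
Your overall strategy (manufacture a discrete valuation satisfying Lemma~\ref{lem: val}) is the same as the paper's, and your characteristic-$p$ reduction and the mechanics of checking the orbit condition for a divisorial valuation are fine. But the crux of your argument is the assertion that an infinite-order regular $k$-automorphism of a normal quasi-projective variety must move \emph{some} prime divisor along an infinite orbit, and this is exactly the point you do not prove. The parenthetical ``analyze the action on N\'eron--Severi and on $\operatorname{Aut}^0(X)$'' does not close it: the N\'eron--Severi argument only shows that some power $\sigma^N$ fixes all divisor \emph{classes} and (for $X$ projective) lands in $\operatorname{Aut}^0(X)$, and $\operatorname{Aut}^0(X)$ can be positive-dimensional with infinite-order elements (translations on abelian varieties, torus actions), so one still has to rule out an infinite-order element of a connected group all of whose divisor-orbits are finite. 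That can be done (one shows every prime divisor becomes invariant under the connected closure of $\langle\sigma^N\rangle$, hence every $f\in K^\times$ is an eigenvector for a character of that group, which forces the action on $K$ to be trivial), but it is a genuine argument of the same weight as what the paper imports from \cite[Lemma 5.6, Theorem 5.7]{BRS} --- and note that your hypothesis only gives $X$ \emph{quasi-projective}, where $\operatorname{Aut}^0$ need not be of finite type and global units need not be scalars, so the sketch does not directly apply. Your fallback genericity argument (``no $H_d$ is covered by countably many proper closed subsets'') is unavailable precisely in the case you need it most, namely the regular-automorphism alternative of the hypothesis, where $k$ may be countable.

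The paper sidesteps this existence question entirely. It runs a dichotomy: either some rational function has a zero or pole along a divisor on an infinite orbit (and then $v_C$ works directly in Lemma~\ref{lem: val}), or else one accumulates functions $f_1,f_2,\dots$ whose divisors all lie on finite orbits, whence the elements $u_i=f_i^{\sigma^n}/f_i$ are global units; finite generation of the units group modulo $k^\times$ then yields a $\sigma^n$-eigenvector $g\notin k$ with eigenvalue of infinite multiplicative order, and one descends to the rational subfield $k(g)$ and applies Lemma~\ref{lem: val} to the valuation at $(g-1)$, which visibly lies on an infinite orbit. This route works uniformly over any base field in the regular case and adapts to the birational/uncountable case by first choosing the $f_i$ so that their divisors avoid the countable set of contracted divisors. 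If you want to keep your approach, you must either supply a complete proof of the infinite-orbit-divisor statement for quasi-projective $X$ over arbitrary $k$, or build in a fallback of the eigenvector type.
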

\begin{proof}
Note that if an element $b$ in $K$ has finite order under $\sigma$, it is algebraic over $k$, thus in $k$ by the assumption that $K/k$ is totally transcendental.  Thus all elements in $K/k$ have infinite order under $\sigma$.  In particular, $K/k$ must have
transcendence degree at least $1$.

Assume first that there is a quasi-projective $k$-variety $X$ with function field $K$ such that $\sigma: K \to K$ is induced by a regular $k$-automorphism of $X$,  which we give the same name $\sigma: X \to X$ (thus on functions we have $f^{\sigma} = f \circ \sigma$).  By replacing $X$ with its normalization if necessary, we can assume that $X$ is normal.  For $f \in K$, let $(f)$ be its principal divisor on $X$ as in \cite[Section II.6]{Ha}; explicitly, $(f) = \sum_{C} v_C(f) C$ where
the sum is over all irreducible divisors $C$ on $X$,  and $v_C$ is the valuation measuring the multiplicity of the zero or pole of $f$ along $C$.  Pick $f_1 \in K \setminus k$ and suppose that $(f_1)$ has a zero or pole which is an irreducible divisor $C$ lying on an infinite $\sigma$-orbit of divisors.  Then we take the valuation $v_C$ of $K$, which satisfies the hypotheses of Lemma~\ref{lem: val} since $v_{\sigma^i(C)} = v_C \circ \sigma^i$ and any rational function has a pole along at most finitely many of the divisors $\sigma^i(C)$.  We conclude by that lemma that $K(x; \sigma)$ contains a free $k$-algebra on $2$ generators.  (Note that the extra hypothesis of Lemma~\ref{lem: val} in characteristic $p$ holds since $K/k$ is totally transcendental.)

Suppose instead that $f_1$ has all of its zeroes and poles on finite $\sigma$-orbits.   Note that $X$ certainly has infinitely many distinct irreducible divisors, since $\dim X \geq 1$.  Thus we can pick $f_2 \in K \setminus k$ such that $f_2$ has a zero or pole along some irreducible divisor not occurring among the divisors appearing in $(f_1)$.
If $(f_2)$ involves an irreducible divisor on an infinite $\sigma$-orbit, the previous paragraph applies and we are done.  Otherwise, we can choose $f_3$ such that $(f_3)$ involves at least one irreducible divisor that has not already appeared.  In this way, we either find an $f$ satisfying the previous paragraph, or else we can pick a sequence of rational functions $f_1, f_2, \dots \in K \setminus k$ such that for each $i$, $f_i$ has a zero or pole along some divisor not appearing in $(f_j)$ for all $1 \leq j < i$, and all of the irreducible divisors in $(f_i)$ lie on finite $\sigma$-orbits.
Assuming the latter case, we now apply a similar argument as in \cite[Theorem 5.7]{BRS} to show that this implies the existence of a $\sigma^n$-eigenvector in $K \setminus k$ for some $n$.  Note that for any $i$, if $n$ is a multiple of the order under $\sigma$ of all of the divisors appearing in $(f_i)$, then
$(f_i^{\sigma^{n}}) = \sigma^{-n}[(f_i)]= (f_i)$ and thus $u_i = f_i^{\sigma^n}/f_i$ is in $G = \Gamma(X, \mc{O}_X)^*$, the units group of the ring of global regular functions on $X$.  Now $H = G/(\overline{k}^* \cap G)$ is a finitely generated abelian group \cite[Lemma 5.6(2)]{BRS} which is easily seen to be torsionfree, where $\overline{k}$ is the algebraic closure of $k$.  In fact, since $K/k$ is totally transcendental, we have that $H =G/(k^* \cap G)$; say this group has rank $d$.   Then we can choose $n > 0$ such that $(f_i^{\sigma^n}) = (f_i)$ for all $1 \leq i \leq d + 1$, and so $u_i = \sigma^{n}(f_i)/f_i$ is in $G$ for all $1 \leq i \leq d + 1$.  This forces $\lambda = u_1^{a_1}u_2^{a_2} \dots u_{d+1}^{a_{d+1}} \in k$ for some integers $a_i$, not all $0$.   Then $g = f_1^{a_1}f_2^{a_2} \dots f_{d+1}^{a_{d+1}}$ satisfies $\sigma^n(g) = \lambda g$.  Moreover, $g \not \in k$, because otherwise $(f_{d+1})$ would involve only irreducible divisors occurring among the $(f_i)$ with $1 \leq i < d+1$.  Now if $\lambda$ is a root of $1$, then $\sigma^m(g) = g$ for some $m > 0$, which implies that $g$ is algebraic over $k$, contradicting that $K/k$ is totally transcendental.  So $\lambda$ has infinite multiplicative order.  Then $L = k(g)$ is a rational function field over $k$ to which the automorphism $\sigma^n$ restricts as an infinite order automorphism, and it suffices to show that $L(x; \sigma^n)$ contains a free subalgebra on two generators.  This follows from another application of Lemma~\ref{lem: val} to $L$ and its automorphism $\sigma^n \vert_L$, choosing the valuation associated to the maximal ideal $(g-1)$ of $k[g]$, which lies on an infinite $\sigma^n \vert_L$-orbit.

Next, we assume instead that $k$ is uncountable.  In this case, we will have to work with a birational map of a variety only, but
will be able to perform a similar argument to the above by choosing $f_i$ such that $(f_i)$ avoids the places where the birational map is not an isomorphism.   Since $K/k$ is finitely generated, it is well known that we can choose a normal projective $k$-variety $X$ such that $k(X) = K$.
%(Certainly we can choose a projective $k$-variety $Y$ with $K(Y) = K$.  Its normalization $X \to Y$ is a finite morphism, and finite %morphisms are projective; since compositions of projective maps are projective, $X \to \spec k$ is a projective map also.)
The automorphism $\sigma: K \to K$ corresponds to a birational map $\sigma: X \dashrightarrow X$.  Since $X$ is normal, given any irreducible divisor $C$ on $X$, $\sigma$ is defined at the generic point $\eta$ of $C$ (\cite[Lemma V.5.1]{Ha}) and so we may define $\sigma(C)$ to be the closure of $\sigma(\eta)$.  Since $\sigma$ is merely birational, $\sigma(C)$ may be a closed subset of codimension greater than $1$ in $X$, in which case we say that $\sigma$ contracts $C$.  However, since $\sigma$ is an isomorphism on some open subset of $X$,  $\sigma$ must contract at most finitely many irreducible divisors.  Then it is clear that the set $S$ of irreducible divisors which are contracted by some $\sigma^n$ with $n \in \mb{Z}$ is countable.

We now show that we can find a plentiful supply of rational functions whose $\sigma$-iterates have divisors entirely avoiding the bad set $S$.  Pick any element $h \in K \setminus k$.  Consider $h + \lambda$ as $\lambda \in k$ varies.  The divisors along which $h + \lambda_1$ and $h + \lambda_2$ have a zero are disjoint if $\lambda_1 \neq \lambda_2$.  For a given $i \in \mb{Z}$, there are countably many $\lambda$ such that $h^{\sigma^i} + \lambda$ has a zero along a divisor in $S$.  Since $k$ is uncountable, there are uncountably many $\lambda$ such that $h^{\sigma^i} + \lambda$ has zeroes only along divisors not in $S$, for all $i$.  Fix such a $\lambda$ and put $g = 1/(h +\lambda)$; thus $g^{\sigma^i}$ has poles only along divisors not in $S$, for all $i$.  By the same argument, for uncountably many $\mu$ the rational functions $g^{\sigma^i} + \mu$ have no zeroes in $S$, for all $i$.  Let 
$f_{\mu} = g + \mu$.  By construction, there are uncountably many $\mu \in k$ such that $f_{\mu}^{\sigma^i}$ has no zeroes or poles 
in $S$, for all $i$; and moreover, $f_{\mu_1}$ and $f_{\mu_2}$ have disjoint zeroes if $\mu_1 \neq \mu_2$.  

Now notice that if $C$ is an irreducible divisor not in $S$, so that $\sigma^i(C)$ is again an irreducible divisor, then we will still have $v_{\sigma^i(C)} = v_C \circ \sigma^i$ for the associated valuations, as in the automorphism case; this is because $\sigma^i$ will give an isomorphism locally from the generic point of $C$ to the generic point of $\sigma^i(C)$.  Moreover, if $f \in K$ has the property that $D = (f)$ and $E = (f^{\sigma^{-i}})$ both involve no divisor in $S$, then we must have $E = \sigma^{i}(D)$, since $\sigma^i$ will give a bijection from the generic points of the divisors involved in $D$ to those involved in $E$.  Since these are the only properties that were really needed in paragraphs 2 and 3 of the proof, now repeat the argument of those paragraphs, choosing the $f_i$ only from among the uncountably many good $f_{\mu}$'s.  Note that by construction, given any list of such, say $f_1, f_2, \dots f_{i-1}$, we can certainly find such an $f_i$ which has a zero along some divisor not appearing in $(f_j)$ for all $1 \leq j < i$.
Thus the argument in paragraphs 2 and 3 constructs a free subalgebra of $K(x; \sigma)$ in this case also.  
\end{proof}

\begin{proof}[Proof of Theorem~\ref{thm: sigma}]
Let $\sigma:K \to K$ be an automorphism of $K/k$.  If every element of $K$ lies on a finite $\sigma$-orbit, then setting $K_n = \{ x \in K | \sigma^n(x) = x \}$, we will have $K = \bigcup_{n \geq 1} K_n$ and thus $K(x; \sigma) = \bigcup_{n \geq 1} K_n(x; \sigma)$ is a directed union of PI algebras.  Thus it is locally PI, and cannot possibly contain a free subalgebra on two generators.

To complete the proof, we assume that there is an element $a \in K$ lying on an infinite $\sigma$-orbit, and need to prove that $K(x; \sigma)$ contains a free subalgebra.  We have seen that we may assume the conditions in Hypothesis~\ref{main-hyp}, so that $k$ is the fixed field of $\sigma$ and $K = k(a^{\sigma^n} | n \in \mb{Z})$.  If $K/k$ is infinitely generated as a field extension, then
we are done by Proposition~\ref{prop: infgen}, with no assumptions on the base field $k$ necessary.  Suppose instead that $K/k$
is finitely generated, and that we have either that (i) $k$ is uncountable, or (ii) there is a $k$-automorphism $\sigma$ of a quasi-projective $k$-variety $X$ with $k(X) = K$ inducing $\sigma: K \to K$ (we may assume that $X$ is normal).   To apply Proposition~\ref{prop: fingen} we need to reduce to the totally transcendental case.   If $L \subseteq K$ is the
subfield of elements algebraic over $k$, then $L/k$ is also finitely generated, and thus $[L: k] <\infty$. The elements in $L$ have finite order under $\sigma$, and thus there is a single power $\sigma^d$ such that $\sigma^d(b) = b$ for all $b \in L$. Let $K' := k(a^{\sigma^{nd}} | n \in \mb{Z})$.  We now replace $(K, \sigma)$ by $(K', \sigma^d)$.   By construction, $K'$ is still generated by the $\sigma^d$-iterates of a single element $a$ on an infinite orbit.  The field  $k' = \{b \in K | \sigma^d(b) =b \}$ certainly contains $L$; in fact, $k'$ is algebraic over the field $k$ of $\sigma$-fixed elements, and so $k' = L$.  Thus the fixed field of $\sigma^d: K' \to K'$ is now $K' \cap L$, and the field extension $K'/(K' \cap L)$ is now totally transcendental. If we have hypothesis (ii), then $X$ is an $L$-variety since rational functions which are
algebraic over $k$ must be global regular functions ($X$ is normal), and  $\sigma^d: X \to X$ is now an $L$-automorphism.  Thus in either case (i) or (ii), replacing the triple $(K/k, \sigma, a)$ with $(K'/(K' \cap L), \sigma^d, a)$ preserves the hypothesis, and now $K'/(K' \cap L)$ is totally transcendental.  By Proposition~\ref{prop: fingen}, $K'(x; \sigma^d)$ contains a free $k$-subalgebra on $2$ generators, and
thus so does the larger division algebra  $K(x; \sigma)$.
\end{proof}

\begin{rem}
We note that the division rings $K(x; \sigma)$ really can be only
locally PI rather than PI.  For example, let $K = \mb{C}(y_1, y_2,
\dots )$ be a function field in infinitely many indeterminates and
define an automorphism $\sigma: K \to K$ which fixes $\mb{C}$ and
has $\sigma(y_n) = \zeta_n y_n$ for a primitive $n$th root of unity
$\zeta_n$.  If $K_n = \mb{C}(y_n)$ then it is easy to check that
$K_n(x; \sigma)$ is a subdivision ring of $K(x; \sigma)$ with PI
degree exactly $n$.
\end{rem}

\section{The derivation case}

Assume Notation~\ref{notn: assume}, with  $K = D$ be a field extension of $k$, $\sigma = 1$, and $\delta: K \to K$ a $k$-derivation, so $E = \{ a \in K | \delta(a) = 0 \}$.  In this section we show that $K(x; \delta)$ satisfies the free subalgebra conjecture. Since $K(x; \delta)$ is an $E$-algebra, as usual we can and do replace the base field $k$ by the central subfield $E$.   In fact, the analysis of $K(x; \delta)$ is much easier than the automorphism case.  In characteristic $0$, this reduces rather trivially to the case of the Weyl algebra (as other authors have also observed).  So our main contribution here is to consider the characteristic $p$ case.
\begin{thm}
\label{der-thm} Let $\delta: K \to K$ be a derivation of a field, where $k = \{a \in K | \delta(a) = 0\}$.
\begin{enumerate}
\item If $\cha k = 0$, then $K(x; \delta)$ contains a free
subalgebra if and only if $\delta \neq 0$.
\item If $\cha k = p > 0$, then $K(x; \delta)$ contains a free
subalgebra if and only if there is an element $a \in K$ such that setting $F_i = k(a, \delta(a), \dots, \delta^{i-1}(a))$, one has $F_i \subsetneq F_{i+1}$ for all $i \geq 0$.
\end{enumerate}
\end{thm}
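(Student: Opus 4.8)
The plan is to prove part~(1) by reducing to the Weyl algebra and invoking Makar-Limanov, and part~(2) by a direct application of the criterion of Theorem~\ref{ml-method-thm}, exploiting the rigidity of derivations in characteristic $p$. The trivial implications are immediate: if $\delta=0$ then $K(x;\delta)=K(x)$ is a commutative field, hence locally PI with no free subalgebra; and a free algebra on two generators is a finitely generated non-PI algebra, so its presence forces $K(x;\delta)$ not to be locally PI. For part~(1) with $\delta\neq 0$, pick $a\in K$ with $c:=\delta(a)\neq 0$ and set $w:=xc^{-1}$. Using the Ore relation and the commutativity of $K$ one computes $wf-fw=c^{-1}\delta(f)$ for $f\in K$, so $w$ commutes with $k=\ker\delta$ and $[w,a]=1$; thus $a$ and $w$ generate a homomorphic image of the Weyl algebra $A_1(k)$, which is simple in characteristic $0$, hence a copy of $A_1(k)$, whose division ring of fractions $D_1(k)$ therefore embeds in $K(x;\delta)$. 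By \cite{ML}, $D_1(k)$, and hence $K(x;\delta)$, contains a free $k$-subalgebra on two generators.

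Now let $\cha k=p$. The structural fact that drives everything is that $k=\ker\delta$ automatically contains $K^p$ (since $\delta(b^p)=0$ for all $b\in K$), so $K/k$ is purely inseparable of exponent $\le 1$. For the \emph{only if} direction I argue contrapositively: if no $a$ satisfies the growth condition, then every $a\in K$ lies in the finitely generated $\delta$-stable subfield $F_i$ at which the chain $(F_j)$ stabilizes, so every finitely generated subalgebra of $K(x;\delta)$ lies in some $K_0(x;\delta)$ with $K_0\supseteq k$ finitely generated and $\delta$-stable (a compositum of finitely many such $F_i$). Writing $K_0=k(g_1,\dots,g_r)$ with $g_i^p\in k$ gives $[K_0:k]\le p^r<\infty$; since $\delta$ vanishes on $k$, $K_0[x;\delta]$ is a finitely generated left $k[x]$-module, so $K_0(x;\delta)$ is finite-dimensional over the subfield $k(x)$, embeds in matrices over $k(x)$ via right multiplication, and is PI. Hence $K(x;\delta)$ is locally PI.

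For the \emph{if} direction, assume $a$ satisfies the growth condition and replace $K$ by the $\delta$-stable subfield $K':=k(a,\delta a,\delta^2 a,\dots)=\bigcup_i F_i$; it suffices to find a free subalgebra of $K'(x;\delta)$. Since $(\delta^i a)^p\in k$ for every $i$, the extension $F_{i+1}=F_i(\delta^i a)$ of $F_i$ is purely inseparable, and the growth hypothesis forces $\delta^i a\notin F_i$, whence $[F_{i+1}:F_i]=p$ with $F_i$-basis $1,\delta^i a,\dots,(\delta^i a)^{p-1}$. Define $\rho(u):=\min\{i:u\in F_i\}$ on $K'$: this is valuation-like, with $\rho(u)=0$ exactly when $u\in k$ and $\rho(cu)=\rho(u)$ for $c\in k\setminus\{0\}$. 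The crucial estimate is $\rho(\delta u)\ge\rho(u)+1$ for $u\in K'\setminus k$: writing $\rho(u)=m\ge 1$, $\theta:=\delta^{m-1}a$, and $u=\sum_{l=0}^{p-1}c_l\theta^l$ with $c_l\in F_{m-1}$, one has $\delta u=\sum_l\delta(c_l)\theta^l+(\delta^m a)\sum_l l\,c_l\theta^{l-1}$, where the first sum lies in $F_m$, where $\delta^m a\notin F_m$, and where $\sum_l l\,c_l\theta^{l-1}\neq 0$ (otherwise $c_1=\dots=c_{p-1}=0$ and $u\in F_{m-1}$), so $\delta u\notin F_m$.

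With these properties in hand, the hypotheses of Theorem~\ref{ml-method-thm} hold for $D=K'$, $\sigma=1$, $b=a$: indeed $a\notin k$ (as $\delta a\neq 0$), and if $\delta(u)=e_1+e_2 a$ with $e_1,e_2\in k$ then $\rho(\delta u)\le 1$, which by the estimate forces $\rho(u)=0$, i.e.\ $u\in k$. Therefore $a(1-x)^{-1}$ and $(1-x)^{-1}$ generate a free subalgebra of $K'(x;\delta)\subseteq K(x;\delta)$, finishing the proof. The step I expect to take the most care is recognizing that the characteristic-$p$ identity $K^p\subseteq k$ is what makes this soft argument work: it excludes precisely the transcendental steps $F_{i-1}\subsetneq F_i$ for which the naive length function $\rho$ would fail, so that no appeal to a prior existence result (as in characteristic $0$) is needed.
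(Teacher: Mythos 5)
Your proposal is correct and follows essentially the same route as the paper: part (1) via the Weyl algebra and Makar-Limanov, the ``only if'' of part (2) via stabilization of the chain $F_i$ giving finite $\delta$-stable subfields and hence local PI-ness, and the ``if'' via Theorem~\ref{ml-method-thm} with $b=a$, where your estimate $\rho(\delta u)\ge \rho(u)+1$ is exactly the paper's computation with $u=\sum u_i b_d^i$ repackaged as a length function. The extra detail you supply on why $[K_0:k]<\infty$ forces $K_0(x;\delta)$ to be PI (finite-dimensionality over $k(x)$ and embedding into matrices) is a correct filling-in of a step the paper leaves unjustified.
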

\begin{proof}
(1). If $\delta \neq 0$, say $\delta(a) \neq 0$, take $y = a$ and $z = x(\delta(a))^{-1}$.  Then $yz - zy =1$, so $D$ contains a copy of the Weyl algebra $A_1(k)$, and hence a free algebra on two generators by Makar-Limanov's original result \cite{ML}. On the other hand, if $\delta = 0$ then $K(x; \delta) \cong K(x)$ is commutative and cannot contain a noncommutative free subalgebra.

(2). Since $\cha k = p$, note that $\delta(b^p) = 0$ for all $b \in K$, so $k$ contains all $p$th powers.  Now fix $a \in K$ and consider the fields $F_i = k(a, \delta(a), \dots, \delta^{i-1}(a))$.  For each $i \geq 1$, if $F_{i-1} \subsetneq F_i$, then since $[\delta^{i-1}(a)]^p \in k \subseteq F_{i-1}$, we must have $[F_i:F_{i-1}] = p$ and $F_{i-1} \subseteq F_i$ is a purely inseparable simple extension.

Suppose that $F_{i-1} \subsetneq F_i$ for all $i \geq 1$, let $F = \bigcup_{i \geq 0} F_i$, and note that $F$ is closed under $\delta$; so it is enough to prove that $F(x; \delta)$ contains a free subalgebra.  Write $b_i = \delta^i(a)$.  By the analysis of the previous paragraph, it easily follows that $F$ has a $k$-basis consisting of all words in the $b_i$ of the form $\{ b_0^{e_1} b_1^{e_2} \dots b_m^{e_m} | 0 \leq e_j \leq p-1 \}$. Now we apply the criterion of Theorem~\ref{ml-method-thm}, with the choice $b = b_0 = a$. Thus it is sufficient to prove the claim that if $u\in F$ satisfies $\delta(u)\in k+kb$, then $u \in k$. To obtain this claim, suppose that $u$ satisfies $\delta(u) \in k + kb$ with $u \not \in k$, so there exists some $d\ge 0$ such that $u \in F_{d+1} \setminus F_d$.  We can write $u$ as
$$u = \sum_{i=0}^{p-1} u_i b_d^i,$$
where each $u_i\in F_d$ and $u_i \neq 0$ for some $i>0$. Thus we see that
$$\delta(u) =  \sum_{i=0}^{p-1} i u_i  b_d^{i-1} b_{d+1} +
\sum_{i = 0}^{p-1} \delta(u_i) b_d^i, $$ where the second sum is contained in $F_{d+1}$.  Since some $u_i \not = 0$ with $i \neq 0$, we have $\delta(u) \not \in F_{d+1}$, contradicting the assumption $\delta(u) \in k + kb$.  This proves the claim, and so $K(x; \delta)$ contains a free algebra on two generators.

On the other hand, suppose that for some $a \in K$, the sequence of fields $F_i = k(a, \delta(a), \dots, \delta^{i-1}(a))$ has $F_i = F_{i+1}$ for some $i$.  Then it is easy to see that $F_n = F_i$ for all $n \geq i$, and so $F_i$ is a $\delta$-invariant subfield, of finite degree over $k$.  If this happens for every $a \in K$, then every finite subset of $K$ is contained in a $\delta$-invariant subfield $F$ of finite degree over $k$, and so is contained in the PI division ring $F(x; \delta)$.   Thus $K(x; \delta)$ is locally PI and does not contain a noncommutative free subalgebra.
\end{proof}

\begin{proof}[Proof of Theorem~\ref{thm: delta}]
Examining the proofs of parts (1) and (2) of Theorem~\ref{der-thm}, we see that $K(x; \delta)$ contains a free subalgebra if and only if it is not locally PI.
\end{proof}

An interesting example of part (2) of Theorem~\ref{der-thm} is obtained by taking $K = \mb{F}_p(x_0, x_1, \dots )$ to be a rational function field in infinitely many indeterminates over the field of $p$ elements, and defining $\delta(x_i) = x_{i+1}$ for all $i \geq 0$. The ring $K(x; \delta)$ then contains a free algebra in two generators over $\mb{F}_p$.  This ring has appeared before in the literature and has other interesting properties.  In particular, Resco and Small studied this ring in \cite{ReSm} as an example of a noetherian affine algebra which becomes non-noetherian after base field extension.

\section{Summary theorems}

In this section, we apply our results to show that the free subalgebra conjecture holds for a large class of algebras formed from iterated Ore extensions.  We state our summary theorems over an uncountable field for convenience, though they hold over an arbitrary field whenever the iterated Ore extension is built out of extensions satisfying Theorem~\ref{thm: sigma}(2).

Before proving our main theorem, we make an easy observation. The reason that we have not yet considered Ore extensions with both an automorphism and derivation is the following fact.
\begin{lem}
\label{delete-lem} Let $D$ be a PI division algebra with automorphism $\sigma$ and $\sigma$-derivation $\delta$.  Then $D[x; \sigma, \delta]$ is isomorphic either to $D[x'; \sigma']$ for some other automorphism $\sigma'$, or else to $D[x'; \delta']$ for some derivation $\delta'$.
\end{lem}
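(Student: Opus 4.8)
The plan is to break into two cases according to whether $\sigma$ acts trivially on the center $Z$ of $D$ (note $k\subseteq Z$). The one structural input needed is Kaplansky's theorem: since $D$ is a PI division algebra it is finite-dimensional over $Z$, hence a central simple $Z$-algebra. Any ring automorphism of $D$ preserves $Z$, so $\sigma$ restricts to an automorphism $\sigma|_Z$ of $Z$, and the two cases to treat are $\sigma|_Z\neq\mathrm{id}_Z$ and $\sigma|_Z=\mathrm{id}_Z$.

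First, suppose $\sigma|_Z\neq\mathrm{id}_Z$. I would pick $\lambda\in Z$ with $\sigma(\lambda)\neq\lambda$, apply $\delta$ to the identity $\lambda a=a\lambda$ (valid since $\lambda$ is central), and use the $\sigma$-Leibniz rule; a short rearrangement yields $(\sigma(\lambda)-\lambda)\delta(a)=\sigma(a)\delta(\lambda)-\delta(\lambda)a$ for all $a\in D$. Since $\sigma(\lambda)-\lambda$ is a nonzero central element it is invertible, so $\delta$ is the inner $\sigma$-derivation $\delta(a)=\sigma(a)c-ca$ with $c:=(\sigma(\lambda)-\lambda)^{-1}\delta(\lambda)$. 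Then I would set $x':=x+c$, so that $x'a=xa+ca=\sigma(a)x+\delta(a)+ca=\sigma(a)x+\sigma(a)c=\sigma(a)x'$ for all $a\in D$. Since $x=x'-c$, the subring generated by $D$ and $x'$ is all of $D[x;\sigma,\delta]$, and relative to the generator $x'$ the defining relations are those of a pure skew polynomial ring, so $D[x;\sigma,\delta]\cong D[x';\sigma]$.

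Now suppose $\sigma|_Z=\mathrm{id}_Z$, so $\sigma$ fixes $Z$ pointwise and is therefore a $Z$-algebra automorphism of the central simple $Z$-algebra $D$. By the Skolem--Noether theorem $\sigma$ is inner: $\sigma(a)=uau^{-1}$ for some unit $u\in D$. Here I would put $y:=u^{-1}x$; from $xa=uau^{-1}x+\delta(a)$ and $x=uy$ one gets $uya=uay+\delta(a)$, hence $ya=ay+u^{-1}\delta(a)$. Writing $\delta'(a):=u^{-1}\delta(a)$, a one-line computation using the $\sigma$-Leibniz rule and $\sigma(a)=uau^{-1}$ gives $\delta'(ab)=\delta'(a)b+a\delta'(b)$, and $\delta'$ inherits $k$-linearity and $\delta'|_k=0$ from $\delta$, so $\delta'$ is a $k$-derivation of $D$; since $D$ and $y$ generate the whole ring, $D[x;\sigma,\delta]\cong D[y;\delta']$. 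There is no serious obstacle: the real content is supplied entirely by the two classical facts (Kaplansky and Skolem--Noether), and the only step warranting a word of care is checking that in each case the new generator together with $D$ is again a free left $D$-module basis of the Ore extension --- equivalently, that the leading coefficient of $(x')^n$, resp.\ of $y^n$, is a unit of $D$ --- so that the displayed relations genuinely present the ring as an Ore extension of the stated special type; this is immediate since $x'$ and $y$ differ from $x$ only by adding a central element, resp.\ by left multiplication by a unit.
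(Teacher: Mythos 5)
Your proof is correct and follows essentially the same route as the paper's: when $\sigma$ moves the center, applying $\delta$ to $\lambda a = a\lambda$ shows $\delta$ is an inner $\sigma$-derivation and a translation $x'=x+c$ removes it; when $\sigma$ fixes the center, Kaplansky plus Skolem--Noether make $\sigma$ inner and left-multiplying $x$ by a unit removes it. Your explicit verification that the new generator still gives a free left $D$-basis, and your sign conventions in the first case, are both fine.
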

\begin{proof}
This is presumably well-known, but we sketch the proof since it is elementary. Let $Z = Z(D)$. For any $a \in D$, $b \in Z$, we have $\delta(ab) = \delta(ba)$ and so
\begin{equation}
\label{commute-eq} \sigma(a) \delta(b) + \delta(a) b = \sigma(b) \delta(a) + \delta(b) a.
\end{equation}
Since $\sigma(b) \in Z$ also, we have $\delta(a)[b - \sigma(b)] = \delta(b) a - \sigma(a) \delta(b)$.  Then if there is any $b \in Z$ such that $\sigma(b) \neq b$, we must have $\delta(a) = (b - \sigma(b))^{-1}[\delta(b) a - \sigma(a) \delta(b)]$ for all $a \in D$.  In this case, making the change of variable $x' = x + (b - \sigma(b))^{-1}\delta(b)$, one easily checks that $D[x; \sigma, \delta] \cong D[x'; \sigma, 0]$.

Otherwise $\sigma(b) = b$ for all $b \in Z$, in other words $\sigma$ is trivial on the center.  By the Skolem-Noether theorem, $\sigma$ is an inner automorphism of $D$, say $\sigma(a) = d^{-1}a d$ for all $a$, some $d \in D^{\times}$.  Then the change of variable $x' =d x$ gives $D[x; \sigma, \delta] \cong D[x'; 1,  d\delta]$.
\end{proof}

\begin{thm}
\label{sum-thm} Let $k$ be an uncountable field.  The following results hold:
\begin{enumerate}
\item Let $A$ be any PI domain which is a $k$-algebra with automorphism $\sigma$ and $\sigma$-derivation $\delta$ (over $k$).  Then the quotient division algebra of $A[x; \sigma, \delta]$ satisfies the free subalgebra conjecture.
\item If $A$ is any affine $k$-algebra which is an Ore domain such that $Q(A)$ satisfies the free subalgebra conjecture, then $Q(A[x; \sigma, \delta])$ also satisfies the conjecture.
\end{enumerate}
\end{thm}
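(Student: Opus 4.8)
The plan is to reduce Theorem~\ref{sum-thm} to the already-established Theorems~\ref{thm: sigma} and~\ref{thm: delta}, by passing to quotient division rings and applying Lemma~\ref{delete-lem}. For part (1), set $D = Q(A)$. Since $A$ is a PI domain it is an Ore domain, and by Posner's theorem $D$ is a PI division ring, finite-dimensional over its center $Z = Z(D)$; moreover $\sigma$ and $\delta$ extend uniquely to $D$, and $A[x;\sigma,\delta]$ is again an Ore domain with $Q(A[x;\sigma,\delta]) = D(x;\sigma,\delta)$. By Lemma~\ref{delete-lem}, $D[x;\sigma,\delta]$ is isomorphic to $D[x';\sigma']$ or to $D[x';\delta']$, hence $D(x;\sigma,\delta)$ is isomorphic to $D(x';\sigma')$ or to $D(x';\delta')$. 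So it suffices to verify the free subalgebra conjecture for $D(x;\sigma)$ and for $D(x;\delta)$ when $D$ is a PI division $k$-algebra and $k$ is uncountable; note that $\sigma$ and $\delta$ each map $Z$ into itself.

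\textbf{The automorphism case.} If some element of $Z$ lies on an infinite $\sigma$-orbit, then the subalgebra $Z(x;\sigma|_Z) \subseteq D(x;\sigma)$ contains a free $k$-subalgebra on two generators by Theorem~\ref{thm: sigma}(1), using that $k$ is uncountable, and $D(x;\sigma)$ is certainly not locally PI. Otherwise every element of $Z$ lies on a finite $\sigma$-orbit, and I claim $D(x;\sigma)$ is locally PI. Given a finitely generated subalgebra $B$, its generators are fractions involving only finitely many elements of $D$; fixing a $Z$-basis $e_1,\dots,e_t$ of $D$ and absorbing into $F := Z^{\sigma^m}$, for $m$ large enough, the finitely many structure constants, the coefficients of $\sigma^{\pm 1}$ on the $e_i$, and the coordinates of the finitely many elements of $D$ involved (all of these lie in $Z$, hence on finite $\sigma$-orbits), one gets $B \subseteq D_0(x;\sigma)$ where $D_0 = \sum_i F e_i$ is a $\sigma$-stable division subalgebra of $D$ finite-dimensional over the field $F$. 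As $\operatorname{Aut}_F(Z(D_0))$ is finite, $\sigma^{mk}|_{D_0}$ is inner for some $k \geq 1$ by the Skolem--Noether theorem, and then a routine argument shows $D_0(x;\sigma)$ is finite-dimensional over a central subfield, hence PI; so $B$ is PI.

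\textbf{The derivation case.} In characteristic $0$: if $\delta|_Z \neq 0$ then $Z(x;\delta|_Z) \subseteq D(x;\delta)$ contains a free subalgebra by Theorem~\ref{der-thm}(1); if $\delta|_Z = 0$ then $\delta$ is $Z$-linear, hence inner (every derivation of a central simple algebra is inner), so after a change of variable $D(x;\delta) \cong D(x') = D \otimes_Z Z(x')$ is PI. In characteristic $p$: if some $a \in Z$ has $k(a,\delta(a),\dots,\delta^{i-1}(a)) \subsetneq k(a,\dots,\delta^i(a))$ for all $i$, then $Z(x;\delta|_Z)$ contains a free subalgebra by Theorem~\ref{der-thm}(2). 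In the remaining characteristic-$p$ subcase the chain stabilizes for every $a \in Z$, and the reduction above gives $B \subseteq D_0(x;\delta)$ with $D_0$ a $\delta$-stable division subalgebra finite-dimensional over $k$; here the clean point is that $\delta^{p^i}$ is again a $k$-derivation of $D_0$ while $\operatorname{Der}_k(D_0)$ is finite-dimensional, so $\sum_i \lambda_i \delta^{p^i} = 0$ for some $\lambda_i \in k$ not all zero, and since $\operatorname{ad}_{x^{p^i}} = \delta^{p^i}$ on $D_0$ in characteristic $p$, the nonzero additive polynomial $g(x) = \sum_i \lambda_i x^{p^i} \in k[x]$ is central in $D_0[x;\delta]$; thus $D_0[x;\delta]$ is module-finite over the central subring $k[g(x)]$, so $D_0(x;\delta)$ is PI and $B$ is PI. This completes (1).

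\textbf{Part (2) and the main obstacle.} Part (2) now follows quickly. If $Q(A)$ is locally PI, then $A$ is a finitely generated subalgebra of it, hence PI, so (1) applies to $A,\sigma,\delta$. If $Q(A)$ is not locally PI, then since it satisfies the free subalgebra conjecture it contains a free $k$-subalgebra on two generators, and so does the larger division ring $Q(A[x;\sigma,\delta]) = Q(A)(x;\sigma,\delta)$, which is moreover not locally PI; so the conjecture holds for it. I expect the main obstacle to be the ``locally PI'' direction inside (1): showing that when the center has no infinite $\sigma$-orbit (respectively, the chains always stabilize), the ring $D(x;\sigma)$ (respectively $D(x;\delta)$) is genuinely locally PI. This rests on the reduction to a finite-dimensional $D_0$ together with the ``finite over the center'' arguments sketched above, and that is where essentially all of the PI-theoretic bookkeeping lives.
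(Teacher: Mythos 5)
Your proposal is correct and follows essentially the same route as the paper: reduce via Lemma~\ref{delete-lem} and Posner's theorem to the two cases $D(x;\sigma)$ and $D(x;\delta)$ with $D$ finite over its center $Z$, pass to $Z$, and invoke Theorem~\ref{thm: sigma} and Theorem~\ref{thm: delta}; part (2) is argued identically. The only divergence is that the paper compresses the claim that the conjecture for $Z(x;\sigma)$ (resp.\ $Z(x;\delta)$) suffices for $D(x;\sigma)$ (resp.\ $D(x;\delta)$) into ``it is easy to see,'' whereas you supply the content behind that remark --- namely that when $Z$ has no infinite $\sigma$-orbit (resp.\ the chains of constants stabilize), the whole ring $D(x;\sigma)$ (resp.\ $D(x;\delta)$) is locally PI, via the reduction to a $\sigma$- or $\delta$-stable finite-dimensional $D_0$ --- which is exactly the verification that implicit step requires.
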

\begin{proof}
(1). Let $D$ be the quotient division algebra of $A$, so that $R$ has quotient ring $Q(R) = D(x; \sigma, \delta)$.  By Lemma~\ref{delete-lem}, it is enough to consider the two special cases $D(x; \sigma)$ and $D(x; \delta)$.  If $K = Z(D)$, then $\sigma$ restricts to $K$ and $D$ is finite over $K$ since $D$ is PI.  It is easy to see that it is enough to prove the free subalgebra conjecture for $K(x; \sigma)$. Now Theorem~\ref{thm: sigma} gives the result.

Similarly, considering $D(x; \delta)$, we have $\delta(K) \subseteq K$ (use \eqref{commute-eq}) and so we easily reduce to the case of $K(x; \delta)$.  We are done by Theorem~\ref{thm: delta}.

(2). If $A$ is not locally PI, then by assumption $Q(A)$ contains a free subalgebra on $2$ generators.  Then there is an embedding $Q(A) \subseteq Q(A[x; \sigma, \delta])$ and of course $Q(A[x; \sigma, \delta])$ is also not locally PI, so we are done in this case. If instead $A$ is locally PI, then it is actually PI by the assumption that $A$ is affine.  Now part (1) applies.
\end{proof}

\begin{proof}[Proof of Theorem~\ref{thm: summary}]
An easy induction using parts (1) and (2) of Theorem~\ref{sum-thm} shows that any iterated Ore extension of an affine PI domain over an uncountable field has a quotient division algebra which satisfies the free subalgebra conjecture.
\end{proof}

\section*{Acknowledgments}
We thank James Zhang, Sue Sierra, Agata Smoktunowicz, Toby Stafford, and Lance Small for many valuable discussions.  We thank Jairo Gon\c{c}alves for pointing out an error in a previous version of the paper. Finally, we are grateful to the referee for providing the proof of Proposition~\ref{prop: infgen}.

\end{document}